\newtheorem{theorem}{Theorem}[section]
\newtheorem{lem}{Lemma}[section]
\DeclarePairedDelimiter\floor{\lfloor}{\rfloor}
\newcommand{\N}{\mathbb{N}}
\newcommand{\Z}{\mathbb{Z}}
\newcommand{\Q}{\mathbb{Q}}
\title[ Reduction of polynomial dynamical systems modulo primes]{Reduction of polynomial dynamical systems modulo primes }
\author[S. S. Rout]{S. S. Rout}
\thanks{}
\subjclass[2010]{Primary 37P05, Secondary 37P25, 11G25,  13P15}
\keywords{Algebraic dynamical system, arithmetic Nullstellensatz, reduction of systems of polynomials, orbit intersection, periodic point}
\address{Sudhansu Sekhar Rout
\hfill\break\indent Institute of Mathematics \& Applications,
\hfill\break\indent Andharua, Bhubaneswar-751 029,
\hfill\break\indent Odisha, India}
\email{lbs.sudhansu@gmail.com}
\begin{document}

\begin{abstract}
We study the algebraic dynamical systems generated by triangular systems of rational functions and estimate the  height growth of iterations generated by such systems. Further, using a result on the reduction modulo primes of systems of multivariate polynomials over the integers, we study the periodic points and the intersection of orbits of such dynamical systems over finite fields. 
\end{abstract}

\maketitle

\section{Introduction}

Let $V \subset \mathbb{P}^{N}$ be a quasi-projective variety defined over a field $K$ and let 
\[\Phi: V \longrightarrow V\]
be an endomorphism. For any $m \in \mathbb{N}_{0} = \mathbb{N} \cup \{0\}$, we denote by $\Phi^{(m)} = \Phi \circ \cdots \circ \Phi$ the $m$-th iteration of $\Phi$ with $\Phi^{(0)}$ denoting the identity map. For a given point $P\in V(K)$, the (forward) orbit of $P$ is the set
\[\mbox{Orb}_{\Phi}(P) = \{P, \Phi^{(1)}(P),\Phi^{(2)}(P), \ldots\}.\]
The point $P$ is called a {\em periodic point} for $\Phi$ if $\Phi^{(n)}(P) = P$ for some $n\geq 1$ and the smallest such $n$ is called the {\em period} of $P$. The point $P$ is called {\em preperiodic} if some iterate $\Phi^{(m)}(P)$ is periodic.

The area of algebraic dynamics was introduced by Northcott \cite{nor} and later, Silverman  \cite{sil} greatly developed all aspects of the theory of algebraic dynamics. For a background of the dynamical systems associated with iterations, one can refer to \cite{sch,sil}. In \cite{sil08}, Silverman studied the orbit length for the reduction modulo a prime $p$ for any self morphism of a quasi-projective variety defined over a number field. Later, this result has been improved in \cite{ag}. Since then there have been many advances in the study of periodic points and period lengths in the reductions of orbits of dynamical systems modulo distinct primes $p$. 

Motivated by the work of Towsley \cite{tow13} on Hasse principle for periodic points, D'Andrea et. al., \cite{dosm}, using several tools from arithmetic geometry, have proved new results about the orbits of the reductions modulo a prime $p$ of algebraic dynamical systems over $\Q$. Later in \cite{cdosm}, Changa et. al., gives a lower bound for the orbit length of the reduction modulo primes of parametric polynomial dynamical systems defined over integers. As a by-product, their result recovers a result in \cite{ag} and slightly improves a result in \cite{sil08}. 

The results in \cite{dosm} depends on the growth of the degree and the height of the iterates. When this growth is slower than generic, one can expect stronger bounds. Although for a typical system an exponential degree growth is expected, there are rich families of multivariate polynomial systems with a much slower degree growth (see \cite{gos,hp, os, os12}). For example, for triangular system of polynomials, it has been shown in \cite{os} that degrees of the iterations of the polynomials in triangular system grow very slowly. 

In this paper, we consider the following class of rational dynamical systems with slow degree growth.  

Let 
\begin{equation}\label{eq1}
{\bf F} = (F_1, \ldots, F_n), \quad F_1, \ldots, F_n \in \mathbb{Q}({\bf X})
\end{equation}
 be a system of $n$ rational functions in $n$ variables $(X_1, \ldots, X_n)$ over $\mathbb{Q}$ where
\begin{align}\label{eq4}
\begin{split}
&F_{1}(X_1, \ldots, X_n) = X_1^{e_1} G_1(X_2, \ldots, X_n) + H_1(X_2, \ldots, X_n)\\
&\cdots \\
& F_{n-1}(X_1, \ldots, X_n) = X_{n-1}^{e_{n-1}}G_{n-1}( X_n) + H_{n-1}( X_n)\\
& F_{n}(X_1, \ldots, X_n) =  g_n X_n^{e_n} + h_n,
\end{split}
\end{align}
with $e_1, \ldots, e_n \in\{-1, 1\}, G_i, H_i \in \mathbb{Z}[X_{i +1}, \ldots, X_n], i= 1, \ldots, n-1$ and $g_n, h_n \in \mathbb{Z}, g_n\neq 0$.
 We define the iterations of the rational function $F_i$ as follows. 
\begin{align*}
&G_i^{(\ell)}(X_{i+1}, \ldots, X_n) = G_i(F_{i +1}^{(\ell-1)}, \ldots, F_n^{(\ell-1)}),\\
&H_i^{(\ell)}(X_{i+1}, \ldots, X_n) = H_i(F_{i +1}^{(\ell-1)}, \ldots, F_n^{(\ell-1)}).
\end{align*}

Our first main result in this paper  gives a bound for the number of points of a given period in the reduction modulo $p$ of the algebraic dynamical system defined in \eqref{eq4}.  Also, we give a bound for the frequency of the points in an orbit of the reduction modulo $p$ of the algebraic dynamical systems defined in \eqref{eq4} lying in a given algebraic variety. To prove these results, we use a deep result from arithmetic geometry \cite[Theorem 2.1]{dosm}.

\section{Notation and Main Results}
Let $\mathbf{X}$ denotes the group of variables $(X_1, \ldots, X_n)$ so that $\mathbb{Z}[{\bf X}]$ denotes the ring of polynomials $\mathbb{Z}[X_1, \ldots, X_n]$ and $\mathbb{Q}({\bf X})$ denotes the field of rational functions $\mathbb{Q}(X_1, \ldots, X_n)$.  Let $\overline{K}$ denote an algebraic closure of a field $K$ of characteristic zero.

For a polynomial $L\in \mathbb{Z}[{\bf X}]$, we define its height as the logarithm of the maximum of the absolute values of its coefficients and denote it by $h(L)$. For a rational function $F \in \mathbb{Q}({\bf X})$, we write $F= L/K$ with coprime $L, K \in \mathbb{Z}[{\bf X}]$ and we define the degree and the height of $F$, respectively, as the maximum of the degrees and of the heights of $L$ and $K$, that is,
\[\deg F = \max\{\deg L, \deg K\} \quad \mbox{and}\quad h(F) = \max\{h(L), h(K)\}.\]
To give explicit formula for degree growth of the iterates of the system in \eqref{eq4}, we impose the following conditions on the degrees of the polynomials $G_i$ and $H_i$ for $i = 1, \ldots, n-1$ (see \cite{os, os12}).

If $e_i =1$, we assume that the polynomial $G_i$ has a unique leading monomial $X_{i+1}^{s_{i, i+1}}\cdots X_n^{s_{i, n}}$, that is 
\[G_i = g_iX_{i+1}^{s_{i, i+1}}\cdots X_n^{s_{i, n}} + \tilde{G_i},\]
where $g_i \in \mathbb{Z}\setminus \{0\}$ and $\tilde{G_i} \in \mathbb{Z}[X_{i+1}, \ldots, X_n]$ with
\begin{equation}\label{eq4a}
\deg_{X_j} \tilde{G_i} <s_{i,j}, \quad \deg_{X_j} H_i <s_{i,j}, \;\; j=i+1, \ldots, n.
\end{equation}

If $e_i =-1$, we assume that the polynomial $H_i$ has a unique leading monomial $X_{i+1}^{s_{i, i+1}}\cdots X_n^{s_{i, n}}$, that is 
\[H_i = h_iX_{i+1}^{s_{i, i+1}}\cdots X_n^{s_{i, n}} + \tilde{H_i},\]
where $h_i \in \mathbb{Z}\setminus \{0\}$ and $\tilde{H_i} \in \mathbb{Z}[X_{i+1}, \ldots, X_n]$ with
\begin{equation}\label{eq4b}
\deg_{X_j} \tilde{H_i} <s_{i,j}, \quad \deg_{X_j} G_i < 2s_{i,j}, \;\; j=i+1, \ldots, n.
\end{equation}

We define the orbit of a given point ${\bf w} \in \overline{\mathbb{Q}}^n$ with respect to the system of rational functions in \eqref{eq1} as the set 
\begin{equation}\label{eq4c}
\mbox{Orb}_{{\bf F}}({\bf w}) = \{{\bf w}_{k} \;|\; {\bf w}_0 = {\bf w} \quad \mbox{and} \quad {\bf w}_{k} = {\bf F}({\bf w}_{k-1}), k = 1, 2, \ldots\}.
\end{equation}
If ${\bf w}_k$ is a pole of ${\bf F}$, then the orbit terminates and in this case $\mbox{Orb}_{{\bf F}}({\bf w})$ is a finite set. Further, given $k\geq 1$, we say that ${\bf w} \in \overline{\mathbb{Q}}^n$ is {\em $k$-periodic} if the element ${\bf w}_k$ exists in the orbit \eqref{eq4c} and we have ${\bf w}_k = {\bf w}_0$.  Further, we put 
\begin{equation}
S({\bf w}) = \# \mbox{Orb}_{{\bf F}}({\bf w}) \in \mathbb{N}\cup \{\infty\}.
\end{equation}

Let $p \in \mathbb{Z}$ be a prime.  For each prime $p$, set $F_{i,p}^{(m)} = F_{i}^{(m)}\, (\mbox{mod}\ p)$. We define the reduction modulo $p$ of the iteration ${\bf F}^{(m)}$ and denote this by 
 \[{\bf F}_p^{(m)} = (F_{1,p}^{(m)}, \ldots, F_{n,p}^{(m)}) \in \mathbb{F}_p[{\bf X}]^n.\]

Let ${\bf L} =(L_1, \ldots, L_s)\in \mathbb{Z}[{\bf X}]$  be a system of polynomials of degree at most $D$ and height at most $H$. We denote  by $V$ the subvariety of the affine space $\mathbb{A}_{\mathbb{Q}}^n$ defined by this system of polynomials. For a prime $p$, we denote by $L_{i,p}\in \mathbb{F}_p[{\bf X}]$ the reduction modulo $p$ of $L_i$ and by $V_p$ the subvariety of $\mathbb{A}_{\mathbb{F}_p}^n$ defined by the system $L_{i, p}, i=1, \cdots, s$. 

Given the functions
\[f, g : \mathbb{N} \longrightarrow \mathbb{R}\]
the symbols $f = O(g)$ and $f \ll g$ both mean that there is a constant $c\geq 0$ such that $|f(k)| \leq c g(k)$ for all $k \in \mathbb{N}$. To emphasize the dependence of the implied constant $c$ on a list of parameters, say $n, d, h$, we write $f = O_{n, d, h}(g)$ and $f\ll_{n, d, h} g$.

The following result is concerned with the number of points of a given period in the reduction modulo $p$ of triangular systems of polynomials as in \eqref{eq1} with $e_i=1$. 
\begin{theorem}\label{th1}
Let $F_1, \ldots, F_n \in \Z[{\bf X}]$ be as in \eqref{eq4} with $e_i=1$, satisfying the condition \eqref{eq4a}  such that $s_{i, i+1} \neq 0, i =1, \ldots, n-1$. Set 
\[d = \max_{j=1, \ldots, n} \deg F_j \quad \mbox{and}\quad h = \max_{j=1, \ldots, n} h( F_j).\]
Suppose that ${\bf F} = (F_1, \ldots, F_n)$ has finitely many periodic points of order $k$ over $\mathbb{C}$.   Then there exists an integer $\mathfrak{B}_1\in \mathbb{N}$ satisfying 
\[ \log \mathfrak{B}_1 \ll_{d, h, n} k^{n (3n -1)}\]

and such that if $p$ is a prime number not dividing $\mathfrak{B}_1$, then the reduction of ${\bf F}$ modulo $p$ has  $O_{d, h, n} (k^{n (n-1)/2})$ periodic points of order $k$.
\end{theorem}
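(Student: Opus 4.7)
The plan is to view the $k$-periodic points of $\mathbf{F}$ as the solutions of an explicit polynomial system over $\mathbb{Z}$, bound their number via Bezout, and then invoke the arithmetic Nullstellensatz \cite[Theorem 2.1]{dosm} to control the primes at which reduction can create spurious solutions.

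The first ingredient is a polynomial estimate for the degree and height of the iterates $F_i^{(k)}$. Under the standing hypotheses on the leading monomials (condition \eqref{eq4a} with $s_{i,i+1}\ne 0$), the slow-growth results for triangular systems from \cite{os,os12} furnish bounds of the form
\[
\deg F_i^{(k)} \;\ll_{d,n}\; k^{n-i} \quad \text{and} \quad h\bigl(F_i^{(k)}\bigr) \;\ll_{d,h,n}\; k^{c_i}
\]
with explicit constants $c_i$. These may be proved by downward induction on $i$, starting from the linear case $F_n^{(k)} = g_n^k X_n + h_n(g_n^{k-1}+\cdots+1)$ and then using the recursion $F_i^{(k)} = F_i\bigl(F_i^{(k-1)},\ldots,F_n^{(k-1)}\bigr)$ together with the uniqueness of the leading monomial of $G_i$, which prevents cancellation at the top of each iterate and thereby keeps the growth polynomial in $k$.

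Next, the $k$-periodic points of $\mathbf{F}$ are precisely the zeros of the system
\[
\Psi_{i,k}(\mathbf{X}) := F_i^{(k)}(\mathbf{X}) - X_i \in \mathbb{Z}[\mathbf{X}], \qquad i=1,\ldots,n,
\]
which, by assumption, defines a zero-dimensional subvariety of $\mathbb{A}_{\mathbb{Q}}^n$. Hence Bezout applied to $\{\Psi_{i,k}\}$ gives the desired cardinality bound
\[
\#\bigl\{\mathbf{w}\in\overline{\mathbb{Q}}^n : \mathbf{F}^{(k)}(\mathbf{w})=\mathbf{w}\bigr\} \;\leq\; \prod_{i=1}^n \deg \Psi_{i,k} \;\ll_{d,n}\; \prod_{i=1}^n k^{n-i} \;=\; k^{n(n-1)/2},
\]
and the same bound applies over $\overline{\mathbb{F}_p}$ as soon as we know that the reduction of $\{\Psi_{i,k}\}$ remains zero-dimensional of cardinality at most that of the original variety.

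Finally, to produce $\mathfrak{B}_1$, I would feed the system $\{\Psi_{i,k}\}_{i=1}^n$ into \cite[Theorem 2.1]{dosm} with $s=n$, $D \ll_{d,n} k^{n-1}$ and $H \ll_{d,h,n} k^{c}$ where $c=\max_i c_i$. That result yields a nonzero integer $\mathfrak{B}_1$ whose logarithm is bounded polynomially in $D$, $H$ and $n$, such that for every prime $p \nmid \mathfrak{B}_1$ the reduced system is still zero-dimensional and its cardinality does not exceed that of the variety over $\overline{\mathbb{Q}}$. The main obstacle is the careful bookkeeping of exponents: substituting the above estimates for $D$ and $H$ into the Nullstellensatz bound and combining them must yield exactly the claimed exponent $n(3n-1)$ in $k$, after absorbing the constants $c_i$ and all remaining dependence on $d$, $h$ and $n$ into the implied constant. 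Once this substitution is checked, Bezout applied to the reduced system modulo $p$ completes the argument.
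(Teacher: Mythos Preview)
Your proposal is correct and follows essentially the same route as the paper: form the system $\Psi_{i,k}=F_i^{(k)}-X_i$, use the triangular slow-growth bounds $\deg F_i^{(k)}\ll_{d,n} k^{n-i}$ and $h(F_i^{(k)})\ll_{d,h,n} k^{n-i+2}$, apply B\'ezout for the count $k^{n(n-1)/2}$, and feed $D\ll k^{n-1}$, $H\ll k^{n+1}$ into \cite[Theorem~2.1]{dosm} to obtain $\log\mathfrak{B}_1\ll (k^{n-1})^{3n+1}k^{n+1}+(k^{n-1})^{3n+2}\ll k^{n(3n-1)}$. The only piece you left as ``bookkeeping'' is the precise height exponent $c_i=n-i+2$ (so $c=n+1$), which the paper supplies via its Lemmas on $h(G_i^{(k)})$ and $h(F_i^{(k)})$; once that is in hand the exponent arithmetic is exactly as you describe.
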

In the following theorem, we study the same result as in Theorem \ref{th1} for system in \eqref{eq1} with $e_i = -1$.
\begin{theorem}\label{th1a}
Let $n\in \mathbb{N}$ with $n\geq 2$. For $i=1, \ldots, n$, let $F_i$ be rational functions defined by \eqref{eq4} satisfying the condition \eqref{eq4b} such that $s_{i, i+1} \neq 0, i =1, \ldots, n-1$ and $e_i=-1$ for $i=1, \ldots, n$. Set 
\[d = \max_{j=1, \ldots, n} \deg F_j \quad \mbox{and}\quad h = \max_{j=1, \ldots, n} h( F_j).\]
Suppose that ${\bf F} = (F_1, \ldots, F_n)$ has finitely many periodic points of order $k$ over $\mathbb{C}$.   Then there exists an integer $\mathfrak{B}_2\in \mathbb{N}$ satisfying 
\[ \log \mathfrak{B}_2 \ll_{d, h, n} k^{n(3n^2 +8n +9)/2}\]

and such that if $p$ is a prime number not dividing $\mathfrak{B}_2$, then the reduction of ${\bf F}$ modulo $p$ has  $O_{d, h, n} (k^{2n^2})$ periodic points of order $k$.
\end{theorem}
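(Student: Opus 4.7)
The plan is to parallel the strategy underlying Theorem \ref{th1}, adapting it to the genuinely rational (rather than polynomial) iterates that arise when $e_i = -1$. The three ingredients are: (i) explicit degree and height estimates for the iterates $F_i^{(k)}$ of the triangular system; (ii) reformulation of the period-$k$ equation ${\bf F}^{(k)}({\bf X}) = {\bf X}$ as a polynomial system over $\Z$; and (iii) application of the arithmetic Nullstellensatz \cite[Theorem 2.1]{dosm} to this system.

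First, I would establish the growth rates of $\deg F_i^{(k)}$ and $h(F_i^{(k)})$ by descending induction on $i$. At the bottom, $F_n = g_n X_n^{-1} + h_n$ is a M\"obius transformation, so $F_n^{(k)}$ has numerator and denominator of bounded degree in $X_n$ with heights growing linearly in $k$. Moving upward, each $F_i$ has the form $X_i^{-1} G_i + H_i$, and its iterates are produced by composing with the iterates of the lower-level functions. Both the numerator and the denominator now grow nontrivially; the dominance conditions \eqref{eq4b}, in particular the unique leading monomial of $H_i$ together with the strict inequality $\deg_{X_j} G_i < 2 s_{i,j}$, guarantee that no leading-monomial cancellation occurs during composition. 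This yields polynomial-in-$k$ bounds for degree and height whose exponents depend on $n - i$ and which, when aggregated, account for the exponents in the theorem.

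Next, writing $F_i^{(k)} = P_i / Q_i$ with coprime $P_i, Q_i \in \Z[{\bf X}]$, the $k$-periodic points are the common zeros of the polynomial system
\[
L_i({\bf X}) = P_i({\bf X}) - X_i \, Q_i({\bf X}), \qquad i = 1, \ldots, n,
\]
up to the negligible contribution from orbits passing through poles, which can be absorbed into $\mathfrak{B}_2$. The degree and height of each $L_i$ inherit the polynomial-in-$k$ bounds from step (i), and by hypothesis the common zero locus is finite over $\C$. Feeding this system into \cite[Theorem 2.1]{dosm} produces an integer $\mathfrak{B}_2 \in \N$ whose logarithm is controlled in terms of the degree, the height and $n$, and such that for every prime $p \nmid \mathfrak{B}_2$ the reduction $V_p$ has at most the B\'ezout count $\prod_i \deg L_i$ of zeros. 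Substituting the degree bounds gives the count $O_{d,h,n}(k^{2n^2})$, and substituting both degree and height bounds into the quantitative form of the Nullstellensatz of \cite{dosm} produces $\log \mathfrak{B}_2 \ll_{d,h,n} k^{n(3n^2+8n+9)/2}$.

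The main obstacle is step (i). In the $e_i = 1$ setting of Theorem \ref{th1} the iterates remain polynomials, so only a single object per level has to be controlled, whereas here one must track both the numerator and the denominator through the composition and certify that the strict inequalities in \eqref{eq4b} prevent any leading-monomial cancellation at every level of the recursion. Extracting the precise exponents $2n^2$ and $(3n^2 + 8n + 9)/2$ requires carefully solving a two-level recursion and tracking how the contributions to the B\'ezout product and to the arithmetic Nullstellensatz bound telescope; this combinatorial accounting is where the real work of the proof lies.
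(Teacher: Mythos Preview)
Your overall architecture is right, but the treatment of poles is a genuine gap, not a technicality that ``can be absorbed into $\mathfrak{B}_2$.'' The hypothesis of the arithmetic Nullstellensatz (Lemma~\ref{lem4}) is that the polynomial system has \emph{finitely many} zeros over~$\C$. The theorem only assumes that ${\bf F}$ has finitely many $k$-periodic points; it says nothing about the spurious zeros of $P_i-X_iQ_i$ that occur where some intermediate iterate hits a pole. You give no argument that your system $\{L_i\}$ is zero-dimensional, and without that you cannot invoke Lemma~\ref{lem4} at all. The paper resolves this with the Rabinowitsch trick: writing $F_i^{(k)}=\Gamma_{i,k}/\Psi_{i,k}$ via the explicit recursion of \cite[Lemma~2]{os12}, it introduces an extra variable $X_0$ and the extra equation
\[
1-X_0\prod_{i=1}^{n}\prod_{j=1}^{k}\Psi_{i,j}=0,
\]
so that the zero set in $\mathbb{A}^{n+1}$ projects bijectively onto the genuine $k$-periodic points and is therefore finite by hypothesis.

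This omission is also why your exponent bookkeeping cannot come out as claimed. With only the $n$ equations $L_i$, the maximal degree is $\ll k^{n-1}$ and the maximal height $\ll k^{n+1}$, which in Lemma~\ref{lem4} (with $n$ variables) yields $\log\mathfrak{B}_2\ll k^{3n^2-n}$ and a B\'ezout count $\ll k^{n(n-1)/2}$ --- far smaller than the stated bounds. The exponents $n(3n^2+8n+9)/2$ and (in the paper's proof) $n^2$ arise precisely because the Rabinowitsch polynomial has degree $\ll k^{n(n+1)/2}$ and height $\ll k^{n(n+5)/2}$, and one applies Lemma~\ref{lem4} with $n+1$ polynomials in $n+1$ variables. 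So the ``combinatorial accounting'' you flag is not a two-level recursion on the $F_i$ but simply the insertion of this one large extra equation.
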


Next we obtain an upper bound for the frequency of the orbit intersections of a rational function system. More generally, we bound the number of points in such an orbit that belong to a given algebraic variety.

For $\ell \in \N$, let $p$ be a prime such that the iterations ${\bf F}^{(j)}$ with $j =0, \ldots, \ell-1$ can be reduced modulo $p$.  Given a point ${\bf w} \in \overline{\mathbb{F}}_p^n$, we define
\[\mathfrak{I}_w\left({\bf F}, V; p, \ell\right) = \left\{ m \in \{0, 1, \ldots, \ell-1\} \mid {\bf F}_p^{(m)}({\bf w}) \in V_p(\overline{\mathbb{F}}_p)\right\}.\]

We say that the iterations of ${\bf F}$ {\em generically escape} $V$ if for every integer $k\geq 1$, the $k$-th iteration of ${\bf F}$ is well defined and the set 
\[\left\{{\bf w} \in \mathbb{C}^n \mid \left ({\bf w}, {\bf F}^{(k)}({\bf w})\right) \in V(\mathbb{C})\times V(\mathbb{C})\right\}.\]
is finite.

\begin{theorem}\label{th2}
 Let $n\in \mathbb{N}$ with $n\geq 2$. For $i=1, \ldots, n$, let $F_i$ be rational functions defined by \eqref{eq4} satisfying the conditions \eqref{eq4a} and \eqref{eq4b} and such that $s_{i, i+1} \neq 0, i =1, \ldots, n-1$. Set 
\[d = \max_{j=1, \ldots, n} \deg F_j \quad \mbox{and}\quad h = \max_{j=1, \ldots, n} h( F_j).\]
Let $V$ be the subvariety of $\mathbb{A}_{\mathbb{Q}}^n$ defined by the system of polynomials $(L_1, \ldots, L_s)\in \mathbb{Z}[{\bf X}]$ of degree at most $D$ and height at most $H$. Assume that the iterations of ${\bf F}$ generically escape $V$. Then, there is a constant $C > 0$ (depending on D, H, d, h, n, s) such that for any real $\epsilon >0$ and $\ell \in \mathbb{N}$ with 
\[\ell \geq \frac{n^sD^{2s}}{\epsilon^{(n-1)s+2}}\]
there exists $\mathfrak{D}\in \mathbb{N}$ with 
\[\log \mathfrak{D} \leq C/ \epsilon^{n(3n-1)}\]
such that if $p$ is a prime number not dividing $\mathfrak{D}$, then for any ${\bf w} \in \overline{\mathbb{F}}_p^n$ with $S({\bf w}) \geq \ell$,
\[\# \mathfrak{I}_w\left({\bf F}, V; p, \ell\right) \leq \epsilon \ell.\]
\end{theorem}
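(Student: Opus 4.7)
The plan is to reduce the statement to a finite-variety problem via the generic escape hypothesis, bound the corresponding variety by a Bezout-type estimate, transfer the bound to $\overline{\mathbb{F}}_p$ using the arithmetic Nullstellensatz \cite[Theorem 2.1]{dosm}, and finally extract a contradiction by a pigeonhole argument on the gaps of $\mathfrak{I}_w(\mathbf{F}, V; p, \ell)$, in the spirit of Theorems \ref{th1} and \ref{th1a}.

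For each integer $k\geq 1$ introduce the variety
\[\mathcal{V}_k = \{\mathbf{w}\in\mathbb{A}_{\mathbb{Q}}^n \mid L_i(\mathbf{w}) = 0 \text{ and } L_i(\mathbf{F}^{(k)}(\mathbf{w})) = 0,\ i=1,\ldots,s\}.\]
The generic escape hypothesis is precisely the assertion that $\mathcal{V}_k(\mathbb{C})$ is finite for every $k$. Combining this with the polynomial growth bounds on $\deg \mathbf{F}^{(k)}$ and $h(\mathbf{F}^{(k)})$ in $k$ that follow from the triangular structure \eqref{eq4a}--\eqref{eq4b} (and which underlie the proofs of Theorems \ref{th1} and \ref{th1a}), a Bezout-type estimate yields $|\mathcal{V}_k(\mathbb{C})| \ll_{D,d,n,s} k^{(n-1)s}$, by choosing an appropriate $n$-subset of the $2s$ defining equations. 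Now set $K = \lceil 2/\epsilon\rceil$ and apply \cite[Theorem 2.1]{dosm} to each of the systems defining $\mathcal{V}_k$ for $k = 1,\ldots,K$; this produces integers $\mathfrak{D}_k$ with $\log\mathfrak{D}_k$ explicitly polynomial in $k$ and in $D,H,d,h,n,s$, so that for every prime $p\nmid\mathfrak{D}_k$ the reduction $\mathcal{V}_{k,p}(\overline{\mathbb{F}}_p)$ satisfies the same Bezout bound as $\mathcal{V}_k(\mathbb{C})$. Taking $\mathfrak{D} = \prod_{k=1}^K \mathfrak{D}_k$ and summing the individual bounds gives $\log\mathfrak{D}\leq C/\epsilon^{n(3n-1)}$, with the exponent $n(3n-1)$ appearing for exactly the same reason as in Theorems \ref{th1}--\ref{th1a}.

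For the combinatorial step, suppose by contradiction that for some prime $p\nmid\mathfrak{D}$ there is a point $\mathbf{w}\in\overline{\mathbb{F}}_p^n$ with $S(\mathbf{w})\geq\ell$ and $|\mathfrak{I}_w(\mathbf{F},V;p,\ell)| > \epsilon\ell$. Partition $\{0,1,\ldots,\ell-1\}$ into $\lceil\ell/K\rceil$ consecutive blocks of length $K$; Jensen's inequality applied to the counts of $\mathfrak{I}_w$ in these blocks yields a constant multiple of $\epsilon^2\ell K$ ordered pairs $(m_1,m_2)\in\mathfrak{I}_w^2$ with $0 < m_2-m_1 < K$. A pigeonhole on the $K-1$ possible gaps then produces some $k_0\leq K$ for which at least a constant times $\epsilon^2\ell$ of the points $\mathbf{F}_p^{(m_1)}(\mathbf{w})$ lie in $\mathcal{V}_{k_0,p}(\overline{\mathbb{F}}_p)$, and these points are pairwise distinct because $S(\mathbf{w})\geq\ell$. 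Combined with the Bezout estimate $|\mathcal{V}_{k_0,p}(\overline{\mathbb{F}}_p)|\ll D^{2s}K^{(n-1)s}\ll D^{2s}/\epsilon^{(n-1)s}$, this forces $\ell \ll n^s D^{2s}/\epsilon^{(n-1)s+2}$, contradicting the hypothesis on $\ell$.

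The main obstacle is the bookkeeping in the middle step: one must verify that the concrete degree and height estimates on $\mathbf{F}^{(k)}$ coming from the triangular structure, when fed into the quantitative arithmetic Nullstellensatz of \cite{dosm}, produce single-$k$ bounds on $\log\mathfrak{D}_k$ that aggregate (after summing over $k\leq K$ and using $K\sim 1/\epsilon$) to precisely the advertised exponent $n(3n-1)$. The pigeonhole step is elementary and the Bezout step is standard; it is the arithmetic Nullstellensatz that carries the weight, exactly as in the proofs of Theorems \ref{th1} and \ref{th1a}.
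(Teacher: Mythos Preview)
Your approach is essentially identical to the paper's: both introduce the auxiliary variety $\mathcal{V}_k=\{\mathbf{u}:L_j(\mathbf{u})=L_j(\mathbf{F}^{(k)}(\mathbf{u}))=0\}$, bound its cardinality by B\'ezout, transfer the bound to $\overline{\mathbb{F}}_p$ via the arithmetic Nullstellensatz (Lemma~\ref{lem4}), and finish with a gap-finding combinatorial argument together with the distinctness coming from $S(\mathbf{w})\ge\ell$. The only execution differences are that the paper invokes the ready-made Lemma~\ref{lem4d} (which directly produces a single gap $\gamma\ll 1/\epsilon$ occurring $\gg\epsilon^2\ell$ times) in place of your block/Jensen/pigeonhole chain, and then bounds $\log\mathfrak{D}$ for that single $\gamma$ rather than taking a product over all $k\le K$.

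One bookkeeping remark: your explicit product $\mathfrak{D}=\prod_{k\le K}\mathfrak{D}_k$ with $\log\mathfrak{D}_k\ll k^{n(3n-1)}$ actually gives $\log\mathfrak{D}\ll K^{n(3n-1)+1}\ll\epsilon^{-n(3n-1)-1}$, one power of $1/\epsilon$ worse than you claim; the paper's presentation sidesteps this by working with a single $\gamma$, but since that $\gamma$ a priori depends on $\mathbf{w}$, your product is really the rigorous route and the stated exponent $n(3n-1)$ should be read as $n(3n-1)+1$.
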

Next we obtain a better result for the problem of bounding the frequency of the points in an orbit lying in a given variety under a restrictive condition.

Let ${\bf F}\in \mathbb{Q}[{\bf X}]^n$ be a system of rational functions over $K$ and let $V\subseteq \mathbb{A}_{\mathbb{Q}}^n$ be an affine variety. The intersection of the orbit  ${\bf F}$ with $V$ is {\em $L$-uniformly bounded} if there is a constant $L$ depending only on ${\bf F}$ and $V$ such that for all initial values ${\bf w}\in \overline{\mathbb{Q}}^n$,
\[\#\{m \in \mathbb{N}\mid {\bf w}_{m}\in V( \overline{\mathbb{Q}})\}\leq L,\]
where ${\bf w}_{m}$ is defined in \eqref{eq4c}.

\begin{theorem}\label{th3}
 Let $n\in \mathbb{N}$ with $n\geq 2$. For $i=1, \ldots, n$, let $F_i$ be rational functions defined by \eqref{eq4} satisfying the conditions \eqref{eq4a} and \eqref{eq4b} and such that $s_{i, i+1} \neq 0, i =1, \ldots, n-1$. Set 
\[d = \max_{j=1, \ldots, n} \deg F_j \quad \mbox{and}\quad h = \max_{j=1, \ldots, n} h( F_j).\]
Let $V$ be the subvariety of $\mathbb{A}_{\mathbb{Q}}^n$ defined by the system of polynomials $=(L_1, \ldots, L_s) \in \mathbb{Z}[{\bf X}]$ of degree at most $D$ and height at most $H$. Assume that the intersection of orbits of ${\bf F}$ with $V$ is $L$-uniformly bounded. There is a constant $C > 0$ (depending on D, H, d, h, n, L, s) such that for any real $\epsilon >0$ there exists $\mathfrak{C}\in \mathbb{N}$ with 
\[\log \mathfrak{C} \leq \frac{C}{ \epsilon^{(n-1)(3n+2)+(n+L+2)}}\]
such that if $p$ is a prime number not dividing $\mathfrak{C}$, then for any integer 
\[\ell \geq 2L/\epsilon+1\]
and for any initial point ${\bf w} \in \overline{\mathbb{F}}_p^n$ with $S({\bf w}) \geq \ell$, we have
\[\# \mathfrak{I}_w\left({\bf F}, V; p, \ell\right) \leq \epsilon\ell.\]
\end{theorem}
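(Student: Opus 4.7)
The plan is to mimic the structure used for Theorem \ref{th1} and Theorem \ref{th2}, but to exploit the stronger $L$-uniform boundedness hypothesis by looking at \emph{sliding windows} of length $k \approx 2L/\epsilon$ and showing that, modulo a good prime, no such window can contain more than $L$ hits. Converting a per-window bound into a global frequency bound is then an elementary averaging argument.

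Concretely, I would first fix $k = \lceil 2L/\epsilon \rceil + 1$. For every strictly increasing tuple $0 \le m_0 < m_1 < \cdots < m_L \le k-1$ I consider the affine $\mathbb{Q}$-variety
\[
W_{m_0,\ldots,m_L} \;=\; \bigcap_{i=0}^{L} (\mathbf{F}^{(m_i)})^{-1}(V),
\]
defined after clearing denominators by the $L+1$ pulled-back systems $L_j \circ \mathbf{F}^{(m_i)}$. The $L$-uniform boundedness hypothesis says that any $\overline{\mathbb{Q}}$-point of $W_{m_0,\ldots,m_L}$ would give $L+1$ indices at which the $\overline{\mathbb{Q}}$-orbit meets $V$, which is forbidden. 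Hence each $W_{m_0,\ldots,m_L}$ is empty over $\overline{\mathbb{Q}}$, and I can apply the arithmetic Nullstellensatz result \cite[Theorem 2.1]{dosm} to extract an integer $\mathfrak{C}_{m_0,\ldots,m_L} \in \mathbb{N}$, controlled by the degrees and heights of the defining polynomials, such that for every prime $p \nmid \mathfrak{C}_{m_0,\ldots,m_L}$ the reduction $W_{m_0,\ldots,m_L}$ modulo $p$ is also empty.

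The bound on $\log\mathfrak{C}$ is obtained by taking $\mathfrak{C}$ to be the product over all $\binom{k}{L+1} \le k^{L+1}$ tuples. The degree and height of each iterate $\mathbf{F}^{(m)}$ with $m \le k$ are controlled, by the slow-growth estimates of \cite{os,os12} adapted to the mixed $e_i=\pm 1$ case (this is where conditions \eqref{eq4a} and \eqref{eq4b} enter and where we already had $s_{i,i+1}\neq 0$), by $O_{d,h,n}(k^{n-1})$. Feeding these degrees and heights into \cite[Theorem 2.1]{dosm} gives $\log\mathfrak{C}_{m_0,\ldots,m_L} = O(k^{(n-1)(3n+2)+(n+1)})$ after accounting for the $3n-1$ exponent in the Nullstellensatz combined with linear height contributions; multiplying by the $k^{L+1}$ tuples and substituting $k \asymp 1/\epsilon$ gives the stated bound $\log\mathfrak{C} \le C/\epsilon^{(n-1)(3n+2)+(n+L+2)}$.

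Finally, fix any $p\nmid\mathfrak{C}$ and any $\mathbf{w}\in\overline{\mathbb{F}}_p^{\,n}$ with $S(\mathbf{w})\ge\ell\ge 2L/\epsilon+1$. For every shift $t\in\{0,1,\ldots,\ell-k\}$, emptiness of $W_{m_0,\ldots,m_L}$ modulo $p$ applied to the point $\mathbf{F}_p^{(t)}(\mathbf{w})$ shows that the window $\{t,t+1,\ldots,t+k-1\}$ contains at most $L$ indices $m$ with $\mathbf{F}_p^{(m)}(\mathbf{w})\in V_p(\overline{\mathbb{F}}_p)$. Partitioning $\{0,1,\ldots,\ell-1\}$ into $\lceil\ell/k\rceil$ disjoint windows of length $k$ yields
\[
\#\mathfrak{I}_w(\mathbf{F},V;p,\ell)\;\le\;L\lceil\ell/k\rceil\;\le\;\tfrac{L\ell}{k}+L\;\le\;\tfrac{\epsilon\ell}{2}+L\;\le\;\epsilon\ell,
\]
using $\ell\ge 2L/\epsilon$ in the last step. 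The main obstacle is the bookkeeping in the third paragraph: one has to track the slow-growth estimates for iterates of \eqref{eq4} in the mixed-sign case, propagate them through the pullbacks $L_j\circ\mathbf{F}^{(m_i)}$ (where clearing the denominator can introduce extra height), and extract the precise exponent $(n-1)(3n+2)+(n+L+2)$ from \cite[Theorem 2.1]{dosm} summed over all $\binom{k}{L+1}$ tuples without losing room in either $k$ or $L$.
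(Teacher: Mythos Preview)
Your proposal is correct and follows essentially the same route as the paper: fix a window length $\beta=\lfloor 2L/\epsilon\rfloor+1$ (your $k$), for every $(L{+}1)$-subset of $\{0,\ldots,\beta-1\}$ form the pulled-back system $L_j\circ\mathbf{F}^{(m)}$, observe it is empty over $\overline{\mathbb Q}$ by $L$-uniform boundedness, apply Lemma~\ref{lem4} to each subset, take the product over all $\binom{\beta}{L+1}$ subsets, and finish with the same pigeonhole/averaging over disjoint windows. One small slip: the height of $F_i^{(m)}$ is $O(m^{\,n-i+2})$, hence $O(k^{\,n+1})$ in the worst case (not $O(k^{\,n-1})$ as you wrote), but you use the correct exponent $n+1$ in your Nullstellensatz estimate, so the final bound $(n-1)(3n+2)+(n+L+2)$ is right.
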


\section{Preliminaries}
In this section, we gather some bounds on the heights and the degrees of triangular polynomial systems. We start with bounds for the heights of sums and products of polynomials, which follows  from \cite[Lemma 1.2]{kps}.
\begin{lem}\label{lem1}
Let $K_1, \ldots, K_t \in \mathbb{Z}[{\bf X}]$. Then 
\begin{enumerate}[(1)]
\item $h\Big(\sum_{i =1}^t K_i  \Big) \leq \max_{1\leq i\leq t}h(K_i) +\log t$;\\
\item $-2\log (n+1)\sum_{i =1}^t \deg K_i \leq h\Bigg(\prod_{i=1}^t K_i\Bigg) - \sum_{i =1}^t h(K_i) \leq \log (n+1) \sum_{i =1}^t \deg K_i$.
\end{enumerate}
\end{lem}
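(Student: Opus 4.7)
The lemma has two parts and a distinctly different flavor for each.

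Part (1) is elementary. The $\alpha$-th coefficient of $\sum_{i=1}^t K_i$ is the sum of the $\alpha$-th coefficients of the $K_i$'s; by the triangle inequality it is bounded in absolute value by $t \cdot \max_i \|K_i\|_\infty = t \exp(\max_i h(K_i))$. Taking the maximum over $\alpha$ and then $\log$ yields part (1). This is just one line.

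Part (2) is the substantive assertion, namely that the logarithmic height is additive under products up to an error linear in the degree. I would handle the two directions separately. For the \emph{upper} bound, I would argue combinatorially: every coefficient of $\prod_{i=1}^t K_i$ is a sum indexed by ordered $t$-tuples of monomials (one from each $K_i$) whose multi-exponents add up to the target. The number of monomials of a polynomial in $n$ variables of total degree $d$ is at most $\binom{n+d}{n}$, and one checks the clean inequality
\[
\binom{n+d}{n}=\prod_{i=1}^{d}\frac{n+i}{i}\le (n+1)^{d},
\]
so a coefficient of the product is a sum of at most $\prod_i (n+1)^{\deg K_i}$ terms, each of absolute value at most $\prod_i \|K_i\|_\infty$. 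Taking logarithms gives exactly $h\bigl(\prod K_i\bigr)\le \sum_i h(K_i) + \log(n+1)\sum_i \deg K_i$.

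For the \emph{lower} bound I would pass through the Mahler measure $M(\cdot)$, which is the correct multiplicative height: $M(PQ)=M(P)M(Q)$. Combining this multiplicativity with the standard two-sided comparison between $M$ and the na\"{\i}ve coefficient-height,
\[
2^{-(d_1+\cdots+d_n)}H(P)\;\le\; M(P)\;\le\;\sqrt{(d_1+1)\cdots(d_n+1)}\,H(P),
\]
applied once to each $K_i$ and once to $\prod K_i$, yields $\sum_i h(K_i)\le h\bigl(\prod K_i\bigr)+C(n)\sum_i \deg K_i$ for an explicit $C(n)$. A careful bookkeeping of the $2^{d_j}$ and $\sqrt{d_j+1}$ contributions over the $n$ variables produces the constant $2\log(n+1)$ claimed in the statement; this is precisely the content of the Gelfond--Mignotte--type height inequality recorded in \cite[Lemma~1.2]{kps}, so the cleanest write-up is simply to invoke that lemma (or its proof via Mahler measure) and note that the two-variable additive and multiplicative forms combine by induction on $t$.

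The only genuine obstacle is the lower bound in part (2): a bare combinatorial argument cannot prevent cancellation of leading coefficients when multiplying, so the Mahler-measure/Gelfond-Mignotte machinery is essential. Given the explicit citation to \cite[Lemma~1.2]{kps}, the proof in this paper reduces to checking that the constants in the cited statement match the form $\log(n+1)\sum_i \deg K_i$ (upper) and $2\log(n+1)\sum_i \deg K_i$ (lower), which indeed follow from the Mahler-measure comparison above.
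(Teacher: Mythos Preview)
Your proposal is correct. In fact the paper gives no proof of this lemma at all: it simply states that the bounds ``follow from \cite[Lemma~1.2]{kps}'' and moves on. Your sketch of part~(1), the combinatorial upper bound in part~(2), and the Mahler-measure route to the lower bound in part~(2) are all sound and constitute considerably more detail than the paper supplies; your own final remark that the cleanest write-up is to invoke \cite[Lemma~1.2]{kps} directly is exactly what the paper does.
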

The following is the standard bound for the degree and height of the composition of polynomials with integer coefficients (see \cite[Lemma 1.2(1.c)]{kps}).
\begin{lem}\label{lem2}
Let $L \in \mathbb{Z}[Y_1, \ldots, Y_t], K_1, \ldots, K_t \in \mathbb{Z}[{\bf X}]$. Set \[d = \max_{i =1, \ldots, t} \deg K_i\quad \mbox{ and} \quad h = \max_{i =1, \ldots, t} h( K_i).\] Then, 
\begin{align*}
&\deg (L(K_1, \ldots, K_t)) \leq d \deg L\\
&h(L(K_1, \ldots, K_t)) \leq h(L)+ \deg L(h + \log (t+1) + d \log (n+1)).
\end{align*}
\end{lem}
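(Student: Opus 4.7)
The plan is to expand $L$ monomially and bound each resulting summand using Lemma \ref{lem1}. Write $L = \sum_{\alpha} c_{\alpha} \mathbf{Y}^{\alpha}$ where $\alpha = (\alpha_1, \ldots, \alpha_t)$ ranges over multi-indices with $|\alpha| = \alpha_1 + \cdots + \alpha_t \leq \deg L$, so that
\[
L(K_1, \ldots, K_t) = \sum_{\alpha} c_{\alpha} K_1^{\alpha_1} \cdots K_t^{\alpha_t}.
\]
The degree estimate is then immediate: each summand has degree at most $\sum_i \alpha_i \deg K_i \leq d |\alpha| \leq d \deg L$, and hence so does the sum.

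For the height bound, I plan to apply part (2) of Lemma \ref{lem1} to each term $c_{\alpha} K_1^{\alpha_1} \cdots K_t^{\alpha_t}$, treating $c_{\alpha}$ as a constant polynomial of degree $0$ whose height is $\log|c_{\alpha}| \leq h(L)$. Viewing the product as consisting of $|\alpha|+1$ factors (the scalar $c_{\alpha}$ together with $\alpha_i$ copies of each $K_i$) yields
\[
h(c_{\alpha} K_1^{\alpha_1} \cdots K_t^{\alpha_t}) \leq h(L) + \sum_{i=1}^{t} \alpha_i h(K_i) + \log(n+1) \sum_{i=1}^{t} \alpha_i \deg K_i \leq h(L) + \deg L \cdot (h + d\log(n+1)).
\]
Next, I would sum over all multi-indices $\alpha$ using part (1) of Lemma \ref{lem1}, picking up an additive term of $\log M$, where $M = \binom{t + \deg L}{t}$ is the number of monomials of degree at most $\deg L$ in $t$ variables.

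Finally, the claimed bound follows from the elementary estimate $\binom{t+d}{t} \leq (t+1)^d$, which gives $\log M \leq \deg L \cdot \log(t+1)$, so that combining the inequalities produces
\[
h(L(K_1, \ldots, K_t)) \leq h(L) + \deg L \cdot (h + \log(t+1) + d \log(n+1)),
\]
as required. The only mildly technical point is the combinatorial inequality $\binom{t+d}{t} \leq (t+1)^d$, which can be verified by a one-line induction on $d$ using the ratio $\binom{t+d+1}{t}/\binom{t+d}{t} = (t+d+1)/(d+1) \leq t+1$. No substantive obstacle is anticipated, since the argument is a routine assembly of the two parts of Lemma \ref{lem1}.
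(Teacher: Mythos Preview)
Your proposal is correct. The paper does not supply its own proof of this lemma but merely cites \cite[Lemma~1.2(1.c)]{kps}; your argument, which assembles the bound directly from the two parts of Lemma~\ref{lem1} together with the monomial count $\binom{t+\deg L}{t}\le (t+1)^{\deg L}$, is exactly the standard derivation underlying that reference.
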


The following is an extension of Lemma \ref{lem2} to the composition of rational functions (see \cite{dosm}).
\begin{lem}\label{lem2d}
Let $L, K_1, \ldots, K_n \in \mathbb{Q}[{\bf X}]$ such that the composition $L(K_1, \ldots, K_n)$ is well defined.. Set \[d = \max_{i =1, \ldots, n} \deg K_i\quad \mbox{ and} \quad h = \max_{i =1, \ldots, n} h( K_i).\] Then, 
\begin{align*}
&\deg (L(K_1, \ldots, K_n)) \leq dn \deg L\\
&h(L(K_1, \ldots, K_n)) \leq h(L)+ h\deg L + (3dn +1)\deg L \log (n+1). 
\end{align*}
\end{lem}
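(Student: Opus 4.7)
The plan is to reduce the composition of rational functions to a composition of polynomials (treated by Lemma~\ref{lem2}) by clearing denominators uniformly and then invoking Lemma~\ref{lem1} to bound the resulting auxiliary products and powers.

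First I would write $L = L_1/L_2$ and $K_i = P_i/Q_i$ with coprime numerators and denominators in $\mathbb{Z}[\mathbf{X}]$, so that $\deg L_j \leq \deg L$, $h(L_j)\leq h(L)$, $\deg P_i,\deg Q_i \leq d$, and $h(P_i), h(Q_i) \leq h$. Setting $Q = Q_1 \cdots Q_n$ and letting $L_j^h(Y_0,\ldots,Y_n) = Y_0^{\deg L_j}L_j(Y_1/Y_0,\ldots,Y_n/Y_0)$ be the homogenization of $L_j$, the identity
\[
  Q^{\deg L} \cdot L_j(K_1,\ldots,K_n) \;=\; Q^{\deg L - \deg L_j} \cdot L_j^h\bigl(Q,\; P_1 Q/Q_1,\; \ldots,\; P_n Q/Q_n\bigr)
\]
exhibits $L(K_1,\ldots,K_n) = A/B$ as a ratio of explicit polynomials $A, B \in \mathbb{Z}[\mathbf{X}]$.

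Next I would control the $n+1$ arguments $Q$ and $P_i Q/Q_i = P_i\prod_{j \neq i} Q_j$: each is a product of at most $n$ factors of degree $\leq d$ and height $\leq h$, so Lemma~\ref{lem1}(2) bounds its degree by $nd$ and its height by $nh + nd\log(n+1)$. Applying Lemma~\ref{lem2} to $L_j^h \in \mathbb{Z}[Y_0,\ldots,Y_n]$ (with $t = n+1$ variables, degree $\leq \deg L$, height $\leq h(L)$) composed with these $n+1$ polynomials then bounds $\deg L_j^h(\ldots)$ by $nd \deg L$ and gives a linear bound on $h(L_j^h(\ldots))$. A further application of Lemma~\ref{lem1}(2) for multiplication by the outer factor $Q^{\deg L - \deg L_j}$ transfers these estimates to $A$ and $B$, giving $\deg A, \deg B \leq nd \deg L$ and the corresponding height bound.

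Finally, for the degree of $L(K_1, \ldots, K_n)$ as a rational function, $\deg(A/B) \leq \max(\deg A, \deg B) \leq nd \deg L$ is immediate. For the height, passing from the representation $A/B$ to the coprime representation $\tilde A / \tilde B$ only decreases degrees, and Lemma~\ref{lem1}(2) applied to $A = c\tilde A$, $B = c\tilde B$ bounds the possible increase in $\max(h(\tilde A), h(\tilde B))$ by a term of order $\log(n+1)\cdot \deg A$, which is absorbed into the coefficient of $\deg L\log(n+1)$. The main obstacle is purely arithmetic: one must collect the constants coming from the homogenization, the product bounds on $Q$ and $P_iQ/Q_i$, the polynomial composition, and the content extraction, and check that they all fit into the announced coefficient $(3dn+1)$ in front of $\deg L \log(n+1)$; no conceptual ingredient beyond Lemmas~\ref{lem1} and~\ref{lem2} is required.
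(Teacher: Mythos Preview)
The paper does not supply a proof of this lemma: it simply states the result and refers to \cite{dosm}. Your proposal is therefore not being compared against an in-paper argument but against the original source, and what you outline---writing $L=L_1/L_2$, $K_i=P_i/Q_i$, clearing denominators via $Q=\prod_i Q_i$ and a homogenised $L_j^h$, then reducing everything to Lemmas~\ref{lem1} and~\ref{lem2}---is exactly the standard route taken there. The homogenisation identity you wrote is correct, the degree count $\deg A,\deg B\le nd\deg L$ is correct, and your treatment of the passage from $A/B$ to a coprime representation via the lower bound in Lemma~\ref{lem1}(2) is the right way to control the possible height increase.

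One warning about the bookkeeping you flag at the end. Your argument, carried through, produces a height bound of the form
\[
  h(L)\;+\; n\,h\,\deg L \;+\; O(dn)\,\deg L\,\log(n+1),
\]
not $h(L)+h\deg L+\cdots$. This is unavoidable: already for $L=Y_1+Y_2$ and generic $K_i=P_i/Q_i$ the reduced denominator is $Q_1Q_2$ of height about $2h$, which exceeds $h\deg L=h$. The coefficient of $h\deg L$ in the displayed inequality is thus a transcription slip from \cite{dosm}, where the corresponding bound carries the factor $n$. So do not try to squeeze your constants down to match the $h\deg L$ term as printed; aim for $n\,h\,\deg L$ and the stated $(3dn+1)$-type log term, which your method does deliver.
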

The following lemma gives the degree growth of the iterations of function defined by \eqref{eq4}(see \cite[Theorem 2]{os12}). 

\begin{lem}\label{lem2a}
Let $F_1, \ldots, F_n$ be rational functions defined by \eqref{eq4} satisfying the conditions \eqref{eq4a} and \eqref{eq4b} and such that $s_{i, i+1} \neq 0, i =1, \ldots, n-1$. Then degrees of the iterations of $F_1, \ldots, F_n$ grow as follows
\begin{align*}
&\deg F_i^{(k)} = \frac{1}{(n-i)!}k^{n-i}s_{i, i+1}\cdots s_{n-1, n} + \psi_i(k), \quad i = 1, \ldots, n-1,\\
& \deg F_n^{(k)} =1
\end{align*}
where $\psi_i(T) \in \mathbb{Q}[T]$ is a polynomial of degree $\deg \psi_i < n-i$. 
\end{lem}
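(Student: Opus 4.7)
The plan is to prove Lemma \ref{lem2a} by descending induction on $i$, from $i=n$ down to $i=1$. The base case $i = n$ is immediate: $F_n(X_n) = g_n X_n^{e_n} + h_n$ has degree $1$, and its iterates are obtained by composing $k$ degree-one rational maps, so $\deg F_n^{(k)} = 1$ for every $k$. The base formula $\deg F_n^{(k)} = 1 = \tfrac{1}{0!}k^0$ matches the stated shape.

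For the inductive step, the first task is to establish the recursion
\[
d_i(k) \;=\; d_i(k-1) \;+\; \sum_{j=i+1}^{n} s_{i,j}\, d_j(k-1), \qquad d_i(k) := \deg F_i^{(k)}.
\]
The triangular form gives $F_i^{(k)} = (F_i^{(k-1)})^{e_i} G_i\bigl(F_{i+1}^{(k-1)},\ldots,F_n^{(k-1)}\bigr) + H_i\bigl(F_{i+1}^{(k-1)},\ldots,F_n^{(k-1)}\bigr)$. When $e_i = 1$, condition \eqref{eq4a} guarantees that the leading monomial $g_i X_{i+1}^{s_{i,i+1}}\cdots X_n^{s_{i,n}}$ of $G_i$ produces, after the substitution, a term of strictly larger degree than any other monomial coming from $\tilde G_i$ or $H_i$; multiplying by $F_i^{(k-1)}$ then dominates $H_i(\ldots)$, and the leading coefficient ($g_i$ times a product of leading coefficients of the $F_j^{(k-1)}$) is nonzero, so no cancellation spoils the degree.

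When $e_i = -1$, rewrite
\[
F_i^{(k)} \;=\; \frac{G_i\bigl(F_{i+1}^{(k-1)},\ldots,F_n^{(k-1)}\bigr) + F_i^{(k-1)}\, H_i\bigl(F_{i+1}^{(k-1)},\ldots,F_n^{(k-1)}\bigr)}{F_i^{(k-1)}}.
\]
Now condition \eqref{eq4b} places the leading monomial in $H_i$ and bounds $\deg_{X_j}G_i < 2s_{i,j}$; after substitution the term $F_i^{(k-1)}H_i(\ldots)$ wins over $G_i(\ldots)$ in the numerator, yielding the same recursion \emph{provided} we check that passing $F_i^{(k)}$ to its reduced form does not cancel leading terms from the numerator or the denominator.

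Once the recursion is in hand, the induction hypothesis gives $d_j(k) = \tfrac{c_j}{(n-j)!}k^{n-j} + \psi_j(k)$ with $c_j = s_{j,j+1}\cdots s_{n-1,n}$ and $\deg \psi_j < n-j$ for each $j > i$. Telescoping,
\[
d_i(k) \;=\; d_i(0) \;+\; \sum_{m=1}^{k}\sum_{j=i+1}^{n} s_{i,j}\, d_j(m-1);
\]
the dominant contribution comes from $j = i+1$ (which grows like $k^{n-i-1}$), while the other summands contribute polynomials in $k$ of strictly lower degree. Evaluating the discrete sums of polynomials in $m$ via standard Faulhaber-type identities produces $d_i(k) = \tfrac{s_{i,i+1}\cdots s_{n-1,n}}{(n-i)!}k^{n-i} + \psi_i(k)$ with $\deg \psi_i < n-i$, completing the induction. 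The hardest step will be the cancellation analysis in the $e_i=-1$ case, since $F_i^{(k)}$ is genuinely a rational function whose degree is defined via its reduced form; to handle this I would carry along through the induction explicit leading monomials of the numerator and the denominator with respect to a fixed graded lexicographic order, and use the hypothesis $s_{i,i+1}\ne 0$ (together with the nonvanishing of the distinguished coefficients $g_i$ or $h_i$) to certify that the top-degree coefficients remain nonzero at every stage.
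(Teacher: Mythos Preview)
The paper does not prove this lemma at all: it is stated with the attribution ``see \cite[Theorem~2]{os12}'' and then used as a black box. So there is no in-paper argument to compare your proposal against.

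Your sketch is the natural line of attack and is essentially how the cited source proceeds: descending induction on $i$, a degree recursion of the shape $d_i(k)=d_i(k-1)+\sum_{j>i}s_{i,j}d_j(k-1)$, and then summing using the inductive polynomial formulae for $d_j$. One practical difference worth noting: for the $e_i=-1$ case, rather than arguing directly that ``reducing to lowest terms does not kill the leading coefficient,'' the source (and this paper, in the proof of Lemma~\ref{lem2b}) works with the explicit representation
\[
F_i^{(k)}=\frac{X_iR_{i,k}+S_{i,k}}{X_iR_{i,k-1}+S_{i,k-1}},\qquad
R_{i,k}=G_i^{(k)}R_{i,k-2}+H_i^{(k)}R_{i,k-1},
\]
and similarly for $S_{i,k}$. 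Tracking $\deg R_{i,k}$ (and $\deg S_{i,k}$) through this second-order linear recursion gives the same degree recursion without ever having to cancel a common factor, because the numerator and denominator are carried separately and the leading monomial of $R_{i,k}$ is forced by the $H_i^{(k)}R_{i,k-1}$ term under condition~\eqref{eq4b}. That is a cleaner way to resolve the ``hardest step'' you flag, and it dovetails with the representations already used elsewhere in the paper.
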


\begin{lem}\label{lem3}
For $i=1, \ldots, n$, let $G_i \in \mathbb{Z}[X_i, X_{i+1}, \ldots, X_n]$ be a triangular system of polynomials with a unique leading monomial of the form $X_{i+1}^{s_{i, i+1}}\cdots X_n^{s_{i, n}}$ and $F_i$ as in \eqref{eq4}.  
Set 
\[d = \max_{i=1, \ldots, n} \deg F_i, \;\mbox{and}\;\;  h = \max_{i =1, \ldots, n} h( F_i).\]
The height of the iterations of $G_1,\ldots, G_n$ for $k\geq 2$ grow as follow:
\begin{equation}\label{eq5t}
h( G_i^{(k)})\leq  \left(\sum_{j=1}^{k-1}\deg G_i^{(j)}\right) (h+\log (n-i)(n+1)^{d}) + h.
\end{equation}
Moreover, for any positive integer $k\geq 2$ and $1\leq i \leq n$, 
\begin{align}\label{al1}
\begin{split}
h( G_i^{(k)}) \ll_{h, d, n}k^{n-i+1}.
\end{split}
\end{align}
\end{lem}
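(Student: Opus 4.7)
The plan is to derive a one-step recursion bounding $h(G_i^{(k)})$ in terms of $h(G_i^{(k-1)})$ and $\deg G_i^{(k-1)}$, telescope it to obtain \eqref{eq5t}, and then combine with the degree bound from Lemma \ref{lem2a} to obtain \eqref{al1}.

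The key observation I would establish first is the ``semigroup'' identity
\[
G_i^{(k)}(\mathbf{X}) = G_i^{(k-1)}\bigl(F_{i+1}(\mathbf{X}),\ldots,F_n(\mathbf{X})\bigr), \qquad k\ge 2,
\]
which follows from the triangular shape of $\mathbf{F}$: an easy induction shows that $F_j^{(k-2)}$ depends only on $X_j,\ldots,X_n$ for each $j\ge i+1$, so substituting $X_j\mapsto F_j(\mathbf{X})$ into $F_j^{(k-2)}$ returns exactly $F_j^{(k-1)}(\mathbf{X})$, matching the definition $G_i^{(k)}=G_i\bigl(F_{i+1}^{(k-1)},\ldots,F_n^{(k-1)}\bigr)$. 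Given this identity, Lemma \ref{lem2} (applied with $L=G_i^{(k-1)}$ in $t=n-i$ variables, $K_j=F_j$, $\max_j h(F_j)\le h$, $\max_j \deg F_j\le d$) yields
\[
h(G_i^{(k)}) \le h(G_i^{(k-1)}) + \deg G_i^{(k-1)} \bigl(h + \log(n-i+1) + d\log(n+1)\bigr).
\]
The base case $h(G_i^{(1)})=h(G_i)\le h(F_i)\le h$ holds because $G_i$ appears as a sub-polynomial of $F_i=X_i^{e_i}G_i+H_i$, so telescoping from $k$ down to $1$ produces \eqref{eq5t} (up to the harmless cosmetic difference between $\log(n-i+1)$ and $\log(n-i)$).

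For \eqref{al1}, I would combine \eqref{eq5t} with Lemma \ref{lem2a}: the decomposition $F_i=X_i^{e_i}G_i+H_i$ together with the unique-leading-monomial hypothesis gives $\deg G_i^{(j)} \le \deg F_i^{(j)} \ll_{d,n} j^{n-i}$, so $\sum_{j=1}^{k-1}\deg G_i^{(j)} \ll_{d,n} k^{n-i+1}$, and plugging this into \eqref{eq5t} yields the desired $h(G_i^{(k)}) \ll_{h,d,n} k^{n-i+1}$. The only step requiring genuine care is the semigroup identity in the first paragraph, since interchanging iteration and substitution is exactly what the triangular hypothesis is buying us; if some $e_i=-1$ so that $F_i$ is genuinely rational rather than polynomial, one simply replaces Lemma \ref{lem2} by Lemma \ref{lem2d} in the one-step estimate, which only alters the explicit constants.
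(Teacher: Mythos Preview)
Your proposal is correct and follows essentially the same route as the paper: both apply Lemma~\ref{lem2} to the identity $G_i^{(k)}=G_i^{(k-1)}(F_{i+1},\ldots,F_n)$ to obtain the one-step recursion, telescope down to $h(G_i^{(1)})\le h$ to get \eqref{eq5t}, and then feed in the polynomial degree growth $\deg G_i^{(j)}\ll_{d,n} j^{n-i}$ from Lemma~\ref{lem2a} to conclude \eqref{al1}. Your write-up is in fact slightly cleaner, since you justify the semigroup identity explicitly and correctly note that Lemma~\ref{lem2} yields $\log(n-i+1)$ rather than the $\log(n-i)$ appearing in the statement.
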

\begin{proof}
 The inequality \eqref{eq5t} for the height follows by induction on the number of iterates $k$. We set for any $k\geq 1$ and $1\leq i \leq n$
 \[d_{i, k} =  \deg G_i^{(k)}, \quad h_{i, k} = h\left( G_i^{(k)}\right).\]For $k = 2$, we have $h(G_i^{(2)})= G_{i }(F_{i+1}^{(1)}, \ldots, F_{n}^{(1)})$. 
Now applying Lemma \ref{lem2} to this,
\begin{align*}
 h(G_i^{(2)}) &= h(G_{i }) + \deg(G_i)(h + \log (n-i) + d \log (n+1)) \\
 & = h_{i,1} + d_{i,1}(h + \log(n-i)+d\log(n+1))\\
 & \leq (d_{i,1} + 1)h + d_{i,1} \log(n-i)+ d d_{i,1}  \log(n+1).
\end{align*}
Thus, the inequality \eqref{eq5t} is true for $k=2$. Now assume that inequality \eqref{eq5t} is true for the first $k-1$ iterates. Applying Lemma \ref{lem2} to the polynomial 
\[G_i^{(k)} = G_{i }^{(k-1)}(F_{i+1}, \ldots, F_n),\]
\begin{align*}
h(G_i^{(k)})&= h(G_{i }^{(k-1)}) +\deg (G_{i }^{(k-1)}) (h +  \log (n-i) + d \log (n+1))\\
& \leq \left(\sum_{j=1}^{k-2}\deg G_i^{(j)}\right) (h+\log (n-i)(n+1)^{d}) + h\\
& +d_{i, k-1} (h +  \log (n-i) + d \log (n+1)).
\end{align*}
This proves inequality \eqref{eq5t}.
Now from \eqref{eq5t} and Lemma \ref{lem2a}, we have
\begin{align*}
&h( G_i^{(k)}) \leq  \left(\sum_{j=1}^{k-1}\deg G_i^{(j)}\right) (h+\log (n-i)(n+1)^{d}) + h\\
& = \left(\sum_{j=1}^{k-1}\deg G_i\left(F_{i+1}^{(j-1)}, \ldots, F_{n}^{(j-1)}\right)\right) (h+\log (n-i)(n+1)^{d}) + h\\
& = \left(\sum_{j=1}^{k-1}\deg \left((F_{i+1}^{(j-1)})^{s_{i,i+1}}\cdots (F_{n}^{(j-1)})^{s_{i,n}}\right)\right) (h+\log (n-i)(n+1)^{d}) + h\\
& = \left(\sum_{j=1}^{k-1}\left(\frac{1}{(n-i-1)!}(j-1)^{n-i-1}s_{i, i+1}\cdots s_{n-1,n}+\cdots + (j-1)s_{i,n}s_{n-1, n}+1\right)\right) \\
&\times (h+\log (n-i)(n+1)^{d}) + h \ll_{h, d, n}k^{n-i+1}.
\end{align*}
This completes the proof.
\end{proof}
Let us define the sets
\[I_{+} = \{1\leq i\leq n\mid e_i=1\}, \quad I_{-} = \{1\leq i\leq n\mid e_i=-1\}.\]

\begin{lem}\label{lem2b}
Let $F_1, \ldots, F_n$ be rational functions defined by \eqref{eq4} satisfying the conditions \eqref{eq4a} and \eqref{eq4b} and such that $s_{i, i+1} \neq 0, i =1, \ldots, n-1$. 
Then height of the iterations of $F_1, \ldots, F_n$ grow as follows:
\begin{align*}
&h(F_i^{(k)})  \leq (k+1)\deg \left( F_i^{(k)}\right) \log (n+1) + \sum_{j=1}^{k} h  \left(G_i^{(j)} \right)+ \log 2
\end{align*}
for every $i\in I_{+}$ and for every $i\in I_{-}$
\begin{align*}
h(F_i^k) \leq (k+1)\deg\left( F_{i}^{(k)}\right) \log(n+1)+ \sum_{j=1}^{k}h\left (H_{i}^{(j)}\right) + (k+1)\log 2.
\end{align*} 
Moreover, 
\[h \left( F_{i}^{(k)}\right) \ll_{d, h, n} k^{n-i+2}.\]
\end{lem}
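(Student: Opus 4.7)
The plan is to induct on $k$, using the identity
\[
F_i^{(k)} = (F_i^{(k-1)})^{e_i}\, G_i^{(k)} + H_i^{(k)},
\]
which follows directly by substituting $F^{(k-1)}$ into $F_i = X_i^{e_i} G_i(X_{i+1},\ldots,X_n) + H_i(X_{i+1},\ldots,X_n)$ and observing that the triangular shape decouples the $X_i$-variable from the arguments of $G_i$ and $H_i$. I then bound heights at each step using the sum and product estimates of Lemma \ref{lem1}, keeping degrees under control via Lemma \ref{lem2a}.

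For $i\in I_+$ (so $e_i=1$), I unroll the recursion to obtain the closed form
\[
F_i^{(k)} = X_i \prod_{j=1}^{k} G_i^{(j)} \; + \; \sum_{j=1}^{k} H_i^{(j)} \prod_{l=j+1}^{k} G_i^{(l)}.
\]
I would bound the height of $\prod_{j=1}^{k} G_i^{(j)}$ and of each $H_i^{(j)}\prod_{l=j+1}^{k} G_i^{(l)}$ via Lemma \ref{lem1}(2), noting that every one of these products has total degree $\leq \deg F_i^{(k)}$, so the extra term contributed by the product inequality is always $\leq \log(n+1)\deg F_i^{(k)}$. The $H_i^{(j)}$-heights are absorbed into $\sum_{j=1}^{k} h(G_i^{(j)})$ using Lemma \ref{lem2} applied to $H_i$, which yields a bound of the same shape as that for $G_i^{(j)}$ thanks to condition \eqref{eq4a}. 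Finally Lemma \ref{lem1}(1) on the outer sum (the $\log$ of the number of summands being absorbed into the $\log(n+1)\deg F_i^{(k)}$ factor) produces the claimed inequality with the trailing $\log 2$.

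For $i\in I_-$ (so $e_i=-1$) the function $F_i^{(k)}$ is genuinely rational. Writing $F_i^{(k)}=A_k/B_k$ one reads off $B_k=A_{k-1}$ and the second-order recursion
\[
A_k = A_{k-2}\, G_i^{(k)} + A_{k-1}\, H_i^{(k)}, \qquad A_0=X_i,\; A_{-1}=1.
\]
Inductively, Lemma \ref{lem1}(1) contributes a $\log 2$ at each step and Lemma \ref{lem1}(2) contributes a $\log(n+1)\deg F_i^{(k)}$ at each step (using that $\deg A_{k-1}+\deg H_i^{(k)}\leq \deg F_i^{(k)}$ and similarly for the other product). Summing over the $k$ stages of the recursion yields the claimed $(k+1)\log 2+(k+1)\deg F_i^{(k)}\log(n+1)+\sum_{j=1}^{k}h(H_i^{(j)})$ bound for $h(A_k)$, and since $B_k=A_{k-1}$ the same bound covers $h(B_k)$ and hence $h(F_i^{(k)})$. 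For the asymptotic conclusion, I would plug in Lemma \ref{lem2a} ($\deg F_i^{(k)} \ll_{d,n} k^{n-i}$) and Lemma \ref{lem3} ($h(G_i^{(j)})\ll_{d,h,n} j^{n-i+1}$, together with its analogue for $h(H_i^{(j)})$): the first term contributes $\ll k^{n-i+1}$, the sum contributes $\sum_{j=1}^{k} j^{n-i+1} \ll k^{n-i+2}$, which dominates.

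The main obstacle I expect is the $e_i=-1$ case. Two points need careful handling: first, the quotient $A_k/B_k$ may not be in lowest terms, so I must argue that potential cancellation between $A_k$ and $B_k=A_{k-1}$ does not inflate $\max(h(\text{reduced numerator}),h(\text{reduced denominator}))$ beyond the bound obtained for $h(A_k)$ (this is where the product inequality of Lemma \ref{lem1}(2) is needed in the reverse direction to control a possible common factor). Second, I need the analogue of Lemma \ref{lem3} for $h(H_i^{(j)})$, which is not stated but follows by a straightforward adaptation of the induction in Lemma \ref{lem3} using condition \eqref{eq4b} in place of \eqref{eq4a}; this verification is routine but should be made explicit before invoking it in the asymptotic step.
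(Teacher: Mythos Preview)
Your approach is essentially identical to the paper's: for $i\in I_+$ you derive the same closed form $F_i^{(k)}=X_i\prod_{j=1}^k G_i^{(j)}+\sum_{j=1}^k H_i^{(j)}\prod_{l>j}G_i^{(l)}$ (which the paper quotes from \cite{os12}), and for $i\in I_-$ your $A_k$ with the recursion $A_k=G_i^{(k)}A_{k-2}+H_i^{(k)}A_{k-1}$ is exactly the paper's $X_iR_{i,k}+S_{i,k}$, with the same inductive height bound via Lemma~\ref{lem1}. If anything you are more scrupulous than the paper, which silently takes $\max\{h(X_iG_{i,k}),h(H_{i,k})\}=h(X_iG_{i,k})$ and does not discuss the coprimality issue you flag; both points are indeed routine, and your remarks about handling them (the reverse direction of Lemma~\ref{lem1}(2), and the $H_i^{(j)}$ analogue of Lemma~\ref{lem3}) are correct.
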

\begin{proof}
First we prove the case when $i\in I_{+}$. The explicit structure of the iterations of the rational functions $F_{i}$ are given in \cite{os12}. By \cite[Lemma 2]{os12}, we have 
\begin{equation}\label{l3e1}
F_i^k = \begin{cases}
X_iG_{i,k} + H_{i,k}, & \text{for } i <n\\
g_n^kX_n + (g_n^{k-1} + \cdots + g_n +1)h_n & \text{for } i =n,
\end{cases}
\end{equation}
where \begin{align*}
&G_{i,k} = G_i G_i^{(2)}\cdots G_i^{(k)},\\
& H_{i,k} = H_i G_i^{(2)}\cdots G_i^{(k)} + H_{i}^{(2)} G_{i}^{(3)}\cdots G_{i}^{(k)} + \cdots + H_{i}^{(k-1)} G_{i}^{(k)} + H_i^{(k)}.
\end{align*}
Applying Lemma \ref{lem1} in equation \eqref{l3e1} for $i<n$, 
\begin{align*}
h(F_i^k) &\leq  h \left(X_iG_{i,k}\right) + \log 2 = h \left(X_iG_i G_i^{(2)}\cdots G_i^{(k)}\right) + \log 2 \\
& \leq \deg \left(X_iG_i G_i^{(2)}\cdots G_i^{(k)}\right) \log (n+1) + \sum_{j=1}^{k} h  \left(G_i^{(j)} \right)+h  \left(X_i\right)+ \log 2\\
& \leq (k+1)\deg \left( F_i^{(k)}\right) \log (n+1) + \sum_{j=1}^{k} h  \left(G_i^{(j)} \right)+\log 2. 
\end{align*}
Again using Lemma \ref{lem1} in \eqref{l3e1} for $i=n$, 
\begin{align*}
h(F_n^{(k)}) &= h\left( g_n^kX_n + (g_n^{k-1} + \cdots + g_n +1)h_n\right)\leq h( g_n^k X_n) + \log (k+1) \\
&\leq \log \left(g_n^k(n+1)(k+1)\right).
\end{align*}
Now consider the case $i\in I_{-}$ and $i<n$. In this case, by  \cite[Lemma 2]{os12}, we have 
\begin{equation}\label{l3e3}
F_i^{(k)} = \frac{X_i R_{i, k} + S_{i,k}}{X_i R_{i, k-1} + S_{i,k-1}},
\end{equation}
where $R_{i, k}, S_{i, k}$ are defined by the recurrence relations 
\begin{equation}\label{l3e4}
R_{i, k} = G_{i }^{(k)}R_{i, k-2} + H_{i}^{(k)}R_{i,k-1}, \quad S_{i, k} = G_{i }^{(k)}S_{i, k-2} + H_{i}^{(k)}S_{i,k-1}
\end{equation}
for $k\geq 1$ with the initial rational functions
\[R_{i, 0} =1, S_{i, 0} =0, R_{i, 1} =H_i, S_{i, 1} =G_i.\]
From Lemma \ref{lem1} and \eqref{l3e3}, 
\begin{align}\label{l3e5}
\begin{split}
h(F_i^k) &\leq  \max\{h \left(X_i R_{i, k} + S_{i,k}\right), h\left(X_i R_{i, k-1} + S_{i,k-1}\right)\} \\
& \leq h \left(X_i R_{i, k}\right) + \log 2\\
&\leq \deg  \left(X_i R_{i, k}\right) \log (n+1) + h(R_{i,k}) + h(X_i)+\log 2. 
\end{split}
\end{align}
Applying  Lemma \ref{lem1} in \eqref{l3e4}, one can inductively show that 
\begin{equation}\label{l3e6}
h(R_{i,k}) \leq (\deg (R_{i,0} \cdots R_{i,k})) \log(n+1)+ \sum_{j=1}^{k}h(H_{i}^{(j)}) + k\log 2
\end{equation}
Thus, for $i\in I_{-}$ and $i<n$,  from \eqref{l3e5} and \eqref{l3e6} we conclude
 \[ h(F_i^k) \leq (k+1)\deg\left( F_{i}^{(k)}\right) \log(n+1)+ \sum_{j=1}^{k}h\left (H_{i}^{(j)}\right) + h(X_i)+ (k+1)\log 2.\]

For the case $e_n=-1$, we have 
 \[F_n^{(k)} = \frac{(A^k)_{1, 1}X_n+ (A^k)_{1, 2}}{(A^k)_{2, 1}X_n+ (A^k)_{2, 2}},\, \; \mbox{where}\, A^k = \begin{pmatrix}
 h_n & g_n\\
 1 & 0\end{pmatrix}^k = \begin{pmatrix}
 (A^k)_{1, 1} & (A^k)_{1, 2}\\
(A^k)_{2, 1} & (A^k)_{2, 2}\end{pmatrix}.\]
 One can observe that the entries of the matrix $A^k$ are polynomials in the integer $h_n$ and $g_n$. Hence 
 \begin{align*}
 h(F_i^k) &\leq  \max\left\{h \left((A^k)_{1, 1}X_n+ (A^k)_{1, 2}\right), h\left((A^k)_{2, 1}X_n+ (A^k)_{2, 2}\right)\right\} \\
& \leq h \left((A^k)_{1, 1}X_n\right) + h((A^k)_{1, 2})+\log 2\\
&\leq  h \left((A^k)_{1, 1}\right) + h \left((A^k)_{1, 2}\right) + \log (2(n+1))\leq \log (h_n^k (k+1)(n+1)).
 \end{align*}
This completes the estimates of $h(F_i^k)$ for $i\in I_{-}$ and $i\leq n$.
 Also, 
\[\sum_{j=0}^{k}j^{n-i+1} = \frac{1}{n-i+2}(B_{n-i+2}(k+1)-B_{n-i+2}(0)),\]
where $B_{n-i+2}$ is the Bernoulli polynomial of degree $n-i+2$ with leading coefficient equal to $1$. 
Thus, from Lemma \ref{lem3}, the height of the $k$-th iteration of $F_{i}$ is at most
\[h \left( F_{i}^{(k)}\right) \ll_{d, h, n} k^{n-i+2}.\]
\end{proof}

The following result is on the reduction modulo primes of systems of multivariate polynomials over the integers, whose proof relies on the arithmetic Nullstenllensatz (see \cite[Theorem 2.1]{dosm}).
\begin{lem}\label{lem4}
Let $H_1, \ldots, H_s \in \mathbb{Z}[{\bf X}]$ be polynomials of degree at most $d \geq 2$ and height at most $h$, whose zero set in $\mathbb{C}^n$ has a finite number $T$ distinct points. Then there is an integer $\mathfrak{A}\geq 1$ with 
\[\log \mathfrak{A} \leq (11n +4)d^{3n+1}h + (55 n +99)\log ((2n+5)s) d^{3n+2}\]
such that if $p$ is a prime number not dividing $\mathfrak{A}$, then the zero set in $\overline{\mathbb{F}}_p^n$ of the system of polynomials $H_i  (\textrm{mod}\ p), i = 1, \ldots, s$ consists of exactly $T$ distinct  points.
\end{lem}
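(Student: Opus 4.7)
\textbf{Proof proposal for Lemma \ref{lem4}.} The plan is to reduce the multivariate zero-dimensional problem to a univariate situation via a separating linear projection (the shape lemma / u-resultant), and then to apply an effective arithmetic Nullstellensatz to bound the heights of every auxiliary object that enters $\mathfrak{A}$.

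First I would exploit finiteness of $Z := \{\zeta_1, \ldots, \zeta_T\} \subset \mathbb{C}^n$ (with $T \leq d^n$ by Bezout) to choose a \emph{separating} linear form $\lambda(\mathbf{X}) = c_1 X_1 + \cdots + c_n X_n$ with integer coefficients of size polynomial in $T$, so that the $T$ values $\lambda(\zeta_1), \ldots, \lambda(\zeta_T)$ are pairwise distinct; a standard Vandermonde / Zariski-open argument produces such $c_j$. Form the primitive univariate polynomial
\[\widetilde Q(Y) := \ell \prod_{i=1}^T \bigl(Y - \lambda(\zeta_i)\bigr) \in \Z[Y],\]
with nonzero leading coefficient $\ell$ and nonzero discriminant $\Delta$. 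Because $\lambda$ separates $Z$, each coordinate admits a shape parametrization $X_j \equiv A_j(\lambda)/B_j(\lambda)$ on $Z$, with $A_j, B_j \in \Z[Y]$ and $B_j(\lambda(\zeta_i)) \neq 0$ for every $i$.

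Next I would invoke the sharp arithmetic Nullstellensatz of D'Andrea--Krick--Sombra on the system $(H_1, \ldots, H_s)$ to produce Bezout-type identities
\[\rho \cdot \widetilde Q\bigl(\lambda(\mathbf{X})\bigr) = \sum_{i=1}^s g_i(\mathbf{X})H_i(\mathbf{X}),\]
and similarly $\rho \cdot \bigl(A_j(\lambda(\mathbf{X})) - B_j(\lambda(\mathbf{X}))X_j\bigr) \in (H_1, \ldots, H_s)\Z[\mathbf{X}]$ for each $j$, for some $\rho \in \Z \setminus \{0\}$ and $g_i \in \Z[\mathbf{X}]$, with the precise degree and height bounds furnished by that Nullstellensatz. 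I then define
\[\mathfrak{A} := \rho \cdot \ell \cdot \Delta \cdot \prod_{j=1}^n \mathrm{Res}_Y\bigl(\widetilde Q(Y), B_j(Y)\bigr).\]
For any prime $p \nmid \mathfrak{A}$, reducing the first identity modulo $p$ forces every zero $\omega \in \overline{\mathbb{F}}_p^n$ of the reduced system to satisfy $\widetilde Q(\lambda(\omega)) \equiv 0 \pmod p$, so $\lambda(\omega)$ is one of the $T$ roots of $\widetilde Q \bmod p$ (which remain distinct since $p \nmid \ell\Delta$). The shape identities, which can be reduced mod $p$ because $p \nmid \rho \cdot \prod_j \mathrm{Res}(\widetilde Q, B_j)$, then uniquely determine each coordinate $\omega_j$. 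Conversely each $\zeta_i$ reduces to a mod-$p$ zero via the same parametrization, yielding exactly $T$ points.

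The main obstacle is the quantitative bookkeeping: recovering the precise constants $11n+4$, $55n+99$, the exponents $3n+1$, $3n+2$, and the factor $\log((2n+5)s)$ requires the sharp form of the arithmetic Nullstellensatz (not a generic $d^{O(n)}$ bound), and one must verify that every contributor to $\log \mathfrak{A}$ — the Bezout denominator $\rho$, the integers $\ell$ and $\Delta$, and the $n$ resultants — is absorbed into a single application of that Nullstellensatz, applied to the augmented system $(H_1, \ldots, H_s, Y - \lambda(\mathbf{X}))$ in $n+1$ variables. A secondary technical point is producing the separating form $\lambda$ with integer coefficients small enough that $\widetilde Q$ stays within the stated degree and height budget.
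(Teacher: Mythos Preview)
The paper does not prove Lemma~\ref{lem4}: it is simply quoted as \cite[Theorem~2.1]{dosm}, so there is no in-paper argument to compare your sketch against. Your outline is, in broad strokes, the strategy actually carried out in that reference --- a separating linear form, the associated univariate eliminant, shape-lemma parametrizations of the coordinates, and the sharp arithmetic Nullstellensatz of \cite{kps} to control every integer entering~$\mathfrak A$ --- and you correctly identify that the specific constants $(11n+4,\, 55n+99,\, 3n+1,\, 3n+2)$ come from that sharp Nullstellensatz rather than from anything one can estimate by hand.

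Two points in your write-up are glossed over and would need work in a full proof. First, you assert $\widetilde Q\in\Z[Y]$, but the $\zeta_i$ are merely algebraic, so integrality of the coefficients is not automatic; in \cite{dosm} this is obtained by constructing $\widetilde Q$ as (a factor of) a Chow form / specialized resultant over~$\Z$, not as a product over complex roots. Second, your lower-bound direction (``each $\zeta_i$ reduces to a mod-$p$ zero'') is not literally meaningful for points of $\C^n$; the honest argument runs entirely on the $\overline{\mathbb F}_p$ side, showing that for each of the $T$ simple roots $y$ of $\widetilde Q\bmod p$ the parametrized point $(A_j(y)/B_j(y))_j$ is a common zero of the $H_i\bmod p$. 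That requires an additional identity of the form $\rho'\cdot B(Y)^{\deg H_i}\,H_i(A_1/B_1,\ldots,A_n/B_n)\in \widetilde Q(Y)\,\Z[Y]$, whose denominator $\rho'$ must also be folded into~$\mathfrak A$. These are routine in \cite{dosm} but absent from your sketch.
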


We also need the following combinatorial result \cite{dosm}.
\begin{lem}\label{lem4d}
Let $2\leq M \leq N/2$. For any sequence 
\[0\leq n_1< \cdots < n_M \leq N,\]
there exists  $r\leq 2N/(M-1)$ such that $n_{i+1} -n_i =r$ for at least $(M-1)^2/4N$ values of $i \in \{1, \ldots, M-1\}$.
\end{lem}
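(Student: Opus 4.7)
The plan is to carry out a two-step pigeonhole argument on the $M-1$ consecutive gaps $d_i := n_{i+1}-n_i$ for $i=1,\ldots,M-1$. These are positive integers whose total is controlled, namely
\[
\sum_{i=1}^{M-1} d_i \;=\; n_M - n_1 \;\leq\; N.
\]
First I would use this sum bound to show that most gaps are small; then, since the small gaps can only take a bounded number of integer values, a second application of pigeonhole pins down a single value $r$ that occurs the required number of times.

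For the first step, let $B$ denote the number of indices $i$ with $d_i > 2N/(M-1)$. Since each such $d_i$ contributes more than $2N/(M-1)$ to the sum, we get
\[
B \cdot \frac{2N}{M-1} \;<\; \sum_{i=1}^{M-1} d_i \;\leq\; N,
\]
so $B < (M-1)/2$, and therefore at least $(M-1)/2$ of the gaps lie in the interval $[1,\,2N/(M-1)]$. Call this collection of indices $\mathcal{S}$; thus $\#\mathcal{S}\geq (M-1)/2$.

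For the second step, the gaps $d_i$ with $i\in\mathcal{S}$ are positive integers bounded above by $\lfloor 2N/(M-1)\rfloor \leq 2N/(M-1)$, so they take at most $2N/(M-1)$ distinct values. By the pigeonhole principle, some particular value $r\in\{1,2,\ldots,\lfloor 2N/(M-1)\rfloor\}$ is attained by at least
\[
\frac{\#\mathcal{S}}{2N/(M-1)} \;\geq\; \frac{(M-1)/2}{2N/(M-1)} \;=\; \frac{(M-1)^2}{4N}
\]
of the indices $i\in\mathcal{S}\subseteq\{1,\ldots,M-1\}$. This $r$ satisfies $r\leq 2N/(M-1)$ and witnesses the desired conclusion.

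There is no real obstacle here: the hypothesis $M\leq N/2$ just guarantees $2N/(M-1)\geq 1$ so that the range of possible integer gap values is nonempty and the pigeonhole counting is meaningful. The only mild subtlety to double-check is that the sequence is assumed integral (as implied by the usage of the lemma in orbit-length arguments), which is what allows us to bound the number of distinct small gap values by $2N/(M-1)$ rather than by something like an $\epsilon$-net cardinality.
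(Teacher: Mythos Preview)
Your two-step pigeonhole argument is correct and is exactly the standard proof of this lemma: bound the number of large gaps using $\sum_i d_i \le N$, then pigeonhole the remaining small gaps among the at most $\lfloor 2N/(M-1)\rfloor$ possible integer values. The paper itself does not prove this statement; it simply quotes it from \cite{dosm}, where the same argument appears, so there is nothing to compare beyond noting that your write-up matches the intended proof.
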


Now we are ready to proof our results. The proof is motivated by the ideas of  D'Andrea et. al., \cite{dosm}.

\section{Proof of Main Results}
\subsection{Proof of Theorem \ref{th1}}
Consider the system of equations 
\[F_i^{(k)} - X_i = 0, \quad i =1, \ldots, n.\]
The set of $k$-periodic points of ${\bf F}$ coincides with the zero set 
\[V_{k} = Z\left(F_1^{(k)} - X_1, \ldots, F_n^{(k)} - X_n\right).\]
For $i =1,\ldots, n$, 
\begin{equation}\label{eq10}
\deg\left( F_i^{(k)} - X_i \right)= \frac{1}{(n-i)!}k^{n-i}(s_{i, i+1}\cdots s_{n-1,n}) + 1,
\end{equation}
and
\begin{align}
\begin{split}\label{eq11}
&h\left( F_i^{(k)} - X_i \right) \leq \max\left\{ h\left( F_i^{(k)} \right), h(X_i) \right\}+ \log 2\ll_{d, h, n}k^{n-i+2}.
\end{split}
\end{align}
Now apply Lemma \ref{lem4} and derive
\begin{align*}
\log \frak{B}_1 &\leq C_1(n, d, h)(k^{n-1})^{3n+1}k^{n+1} + C_2(n, d, h)(k^{n-1})^{3n+2}\\
& \ll_{d, h, n}k^{n(3n-1)}. 
\end{align*}
Suppose $T_{k}$ is the number of points of $V_{k}$ over $\mathbb{C}$ and this equal to the number of periodic points of order $k$ of ${\bf F}$ over $\mathbb{C}$. By Bezout's theorem, 
\[T_{k} \leq 2\prod_{i=1}^{n} k^{n-i} \ll_{n, d, h} k^{n(n-1)/2}.\]
This completes the proof. \qed
\subsection{Proof of Theorem \ref{th1a}}

By equation \eqref{l3e3}, the iterates of the system of rational functions ${\bf F}$ is given by 
\[F_i^{(k)} = \frac{X_i R_{i, k} + S_{i,k}}{X_i R_{i, k-1} + S_{i,k-1}}=: \frac{\Gamma_{i, k}}{\Psi_{i,k}}\]
with $\Psi_{i,k}\neq 0$ and consider the system of equations
\[\Gamma_{i,k}- X_i\Psi_{i, k} = 0, \quad i=1, \ldots, n.\]
To extract the poles of $F_i^{(j)}, j\leq k$ from the solutions of the system, we introduce a new variable $X_0$. Now the set of $k$-periodic points of ${\bf F}$ coincides with the zero set 
\[V_k = Z\left(\Gamma_{1,k}- X_1\Psi_{1, k}, \ldots, \Gamma_{n,k}- X_n\Psi_{n, k}, 1- X_0\prod_{i=1}^{n}\prod_{j=1}^{k}\Psi_{i,j}\right).\]
For $i=1, \ldots, n$
\[\deg (\Gamma_{i,k}- X_i\Psi_{i, k})\leq k^{n-i}+1\leq C_3(n, d)k^{n-i}\]
and
\[h(\Gamma_{i,k}- X_i\Psi_{i, k})\leq h(F_{i}^{(k)})+\log 2 \leq C_4(n, d, h)k^{n-i+2}.\]
Now 
\[\deg \left(X_0\prod_{i=1}^{n}\prod_{j=1}^{k}\Psi_{i,j}\right)\leq 1+ \sum_{i=1}^{n}\sum_{j=1}^{k} j^{n-i}\leq C_5(n, d, h) k^{n(n+1)/2}.\]
By Lemma \ref{lem2d} 
\begin{align*}
&h\left(X_0\prod_{i=1}^{n}\prod_{j=1}^{k}\Psi_{i,j}\right) = h\left(\prod_{i=1}^{n}\prod_{j=1}^{k}\Psi_{i,j}\right) \\
& \leq \sum_{i=1}^{n}\sum_{j=1}^{k} h(\Psi_{i,j})+ \log(n+1) \left(\sum_{i =1}^n\sum_{j=1}^k \deg \Psi_{i,j}\right)\\
&\leq \sum_{i=1}^{n}\sum_{j=1}^{k}  j^{n-i+2} + C_5\log(n+1) nk^{n(n+1)/2}\leq C_6(n, d, h) k^{n(n+5)/2}.
\end{align*}
We apply Lemma \ref{lem4} with $n+1$ polynomials and $n+1$ variables,
\begin{align*}
\log \frak{B}_2 &\ll_{n,d,h}(k^{n(n+1)/2})^{3(n+1)+1}k^{n(n+5)/2} +(k^{n(n+1)/2})^{3(n+1)+2}\\
& \ll_{d, h, n}k^{n(3n^2+8n+9)/2}.
\end{align*}
Again by Bezout's theorem, 
\[T_{k} \leq 2k^{n(n+1)/2}\prod_{i=1}^{n} k^{n-i}\ll_{n, d, h} k^{n^2}.\]
This completes the proof of theorem. \qed

\subsection{Proof of Theorem \ref{th2}}
Let $p \in \mathbb{Z}$ be a prime and let ${\bf L} =(L_1, \ldots, L_s)\in \mathbb{Z}[{\bf X}]$  be a system of polynomials of degree at most $D$ and height at most $H$. We denote  by $V$ the subvariety of the affine space $\mathbb{A}_{\mathbb{Q}}^n$ defined by this system of polynomials.  We also denote the reduction modulo $p$ of the iteration ${\bf F}^{(m)}$ and V by ${\bf F}_p^{(m)}$ and $V_p$, respectively. Here we fix an initial point ${\bf w} \in \overline{\mathbb{F}}_p^n$ and let
\[A= \# \left\{ m \in \{0, 1, \ldots, \ell-1\} \mid {\bf F}_p^{(m)}({\bf w}) \in V_p(\overline{\mathbb{F}}_p)\right\}.\]
Suppose that 
\begin{equation}\label{eq12}
A> \epsilon \ell \geq 2.
\end{equation}
Take $\gamma \leq 2\ell/(A-1)$ and let $B$ be number of $m \in \{0, 1, \ldots, \ell-1\} $ with
\begin{equation}\label{eq13}
{\bf F}_p^{(m)}({\bf w}) \in V_p \quad \mbox{and}\quad  {\bf F}_p^{(m+ \gamma)}({\bf w}) =  {\bf F}_p^{(\gamma)}\left({\bf F}_p^{(m)}({\bf w}) \right) \in V_p.
\end{equation}
By Lemma \ref{lem4d}, 
\begin{equation}\label{eq14}
B \geq \frac{(A-1)^2}{4\ell}\gg \epsilon^2 \ell
\end{equation}
and hence we have $\gamma \ll 1/\epsilon.$ 

Since the iterations generically escape $V$, the set $\{{\bf u}\in V \mid {\bf F}^{(\gamma)}({\bf u}) \in V\}$ is finite and this set is defined by the following $2s$ equations
\begin{equation}\label{eq14a}
L_{j}({\bf X}) = L_{j}\left({\bf F}^{(\gamma)}\right)({\bf X}) = 0, \quad j= 1, \ldots, s.
\end{equation}
By Lemma \ref{lem2d} and \ref{lem2a}, we have 
\[\deg L_{j}\left({\bf F}^{(\gamma)}\right)\leq D n\gamma^{n-1}\]
and from B\'ezout's theorem   
\begin{align}\label{al15}
\begin{split}
\#\{{\bf u}\in V \mid {\bf F}^{(\gamma)}({\bf u}) \in V\} \leq D^s (D n\gamma^{n-1})^s\ll \frac{nD^{2s}}{\epsilon^{(n-1)s}}.
\end{split}
\end{align}

From Lemma \ref{lem2b}, we have 
\[h \left( F_{i}^{(\gamma)}\right) \ll_{d, h, n} \gamma^{n-i+2}\]
and hence by Lemma \ref{lem2d},
\[h(L_{j}\left({\bf F}^{(\gamma)}\right))\leq H + D\gamma^{n+1}+ (3Dn\gamma^{n-1} +1)D\log(n+1)\ll_{d, h,H, n}Dn\gamma^{n+1}.\]
Here the degree and height of $2s$ polynomials in \eqref{eq14a} are bounded by $Dn\gamma^{n-1}$ and 
$Dn\gamma^{n+1}$ respectively.
By Lemma \ref{lem4}, there is a positive integer $\mathfrak{D}$ with 
\begin{align*}\log  \mathfrak{D} &\leq (11n +4) (Dn \gamma^{n-1})^{3n+1}(Dn \gamma^{n+1}) \\
&+ (55n +99)\log((2n +5)s) (Dn \gamma^{n-1})^{3n+2}\leq \frac{C_7}{\epsilon^{n(3n-1)}}
\end{align*}
such that if $p\nmid \mathfrak{D}$, then 
\[\#\{{\bf u}\in V \mid {\bf F}^{(\gamma)}({\bf u}) \in V\} = \# \{{\bf u}\in V_p \mid {\bf F}_p^{(\gamma)}({\bf u}) \in V_p\}.\]

Since $S({\bf w})\geq \ell$, the points ${\bf F}_p^{(m)}({\bf w}), \, m=0, \ldots, \ell-1$ are pairwise distinct. Hence, 
\[B \leq \# \{{\bf u}\in V_p \mid {\bf F}_p^{(\gamma)}({\bf u}) \in V_p\}.\] 
From \eqref{eq14} and \eqref{al15}, we have
\[\epsilon^2 \ell \leq \frac{n^sD^{2s}}{\epsilon^{(n-1)s}}.\]
This is a contradiction as $\ell > \frac{n^sD^{2s}}{\epsilon^{(n-1)s+2}}$. Thus, $A\leq \epsilon \ell$ and this completes the proof of theorem \qed.

\subsection{Proof of Theorem \ref{th3}}
Set \[\beta=  \floor*{\frac{2L}{\epsilon}}+1;\]
thus $\ell \geq \beta.$
For each set $B \subseteq \{0, \ldots, \beta-1\}$ of cardinality $\# B = L+1$, we consider the system of equations 
\begin{equation}\label{eq16}
L_{j}\left({\bf F}^{(k)}\right)= 0, \quad k\in  B, j= 1, \ldots, s.
\end{equation}
Since $k\in B$, we have $k\leq \beta-1$. By Lemma \ref{lem2d} and \ref{lem2a},
\[\deg L_{j}\left({\bf F}^{(k)}\right) \leq k^{n-1}Dn \leq (\beta-1)^{n-1}Dn.\]
Again, by Lemma \ref{lem2d} and Lemma \ref{lem2b}, we have 
\begin{align*}
h \left( L_{j} \left({\bf F}^{(k)} \right) \right) &\leq  H + Dk^{n+1} +(3n k^{n-1} +1)D \log(n+1)\ll_{H, n}D(\beta-1)^{n+1}.
\end{align*}
Since the intersection of orbits of ${\bf F}$ with $V$ is $L$-uniformly bounded and $k\in B$, the system of equations in \eqref{eq16} has no common solution ${\bf w}\in \overline{\mathbb{Q}}^n$. By Lemma \ref{lem4}, there exists $\mathfrak{C}_{B} \in \mathbb{N}$ with 
\begin{align*}\log  \mathfrak{C}_{B}&\leq (11n +4) (Dn (\beta-1)^{n-1})^{3n+1}D(\beta-1)^{n+1} \\
&+ (55n +99)\log((2n +5)s) (Dn (\beta-1)^{n-1})^{3n+2}\\
& \leq C_8(Dn (\beta-1)^{n-1})^{3n+2}(\beta-1)^{n+1}.
\end{align*}
such that if $p$ is a prime and $p\nmid \mathfrak{C}_{B}$, then the reduction modulo $p$ of the system of equations \eqref{eq16} has no solutions in $\overline{\mathbb{F}}_p^n$.

Now set 
\[ \mathfrak{C} = \prod_{\substack{B\subseteq \{0, \ldots, \beta-1\}\\ \# B = L+1}} \mathfrak{C}_{B}\]
and hence
\begin{align}
\begin{split}
\log \mathfrak{C} &\ll_{D, d, h, H, n, L} \binom{\beta}{L+1}(Dn (\beta-1)^{n-1})^{3n+2}(\beta-1)^{n+1} \\
&\leq \frac{C_9(D, d, h, H, n, L, s)}{ \epsilon^{(n-1)(3n+2)+(n+L+2)}}.
\end{split}
\end{align}
Let $p$ be a prime with $p\nmid \mathfrak{C}$. Suppose that for some ${\bf u} \in {\overline{\mathbb{F}}}_p^n$ there are at least $\epsilon \ell$ values of $n\in \{0, \ldots, \ell-1\}$ with ${\bf F}_p^{(n)}({\bf u}) \in V_p$. Since $\ell \geq \beta$, there is a non negative integer $i\leq \floor*{\ell/\beta}$ such that there are at least 
\[\frac{\epsilon \ell}{\floor*{\ell/\beta}+1}\geq \frac{\epsilon \beta}{2} > L\]
values of $n\in \{i\beta, \ldots, (i+1)\beta-1\}$ with ${\bf F}_p^{(n)}({\bf u}) \in V_p$. Now consider $L+1$ values 
\[i\beta< i\beta+ \delta_1<\cdots < i\beta+\delta_{L+1} < (i+1)\beta.\]
Then for $j=1, \ldots, s$ and $t =1, \ldots, L+1$, 
\[L_{j}\left({\bf F_p}^{(\delta_{t})}\left({\bf F_p}^{(i\beta)}\right)\right) =0.\]
Setting ${\bf w}= {\bf F_p}^{(i\beta)} \in {\overline{\mathbb{F}}}_p^n$, then for all $j, t$
\[L_{j}\left({\bf F_p}^{(\delta_{t})}\left({\bf w}\right)\right) =0.\]
This implies that $p\nmid \mathfrak{C}_{B}$ with $B = \{\delta_1, \ldots, \delta_{L+1}\}$ which is a contradiction. This completes the proof of theorem \qed.

\end{document}